\documentclass[12pt]{article}

\usepackage{amsmath}
\usepackage{amsfonts}
\usepackage{amsthm}

\begin{document}

%1111111111111111111111111111111111111111111111111111111111111111111111111111

\author{Attila Losonczi}
\title{Extending means to several variables}

\date{30 Oct 2018}

\newtheorem{thm}{\qquad Theorem}[section]
\newtheorem{prp}[thm]{\qquad Proposition}
\newtheorem{lem}[thm]{\qquad Lemma}
\newtheorem{cor}[thm]{\qquad Corollary}
\newtheorem{rem}[thm]{\qquad Remark}
\newtheorem{ex}[thm]{\qquad Example}
\newtheorem{prb}[thm]{\qquad Problem}
\newtheorem{df}[thm]{\qquad Definition}

\maketitle

\begin{abstract}

\noindent

We begin the study of how to extend few variable means to several variable ones and how to shrink means of several variables to less variables. With the help of one of the techniques we show that it is enough to check an inequality between two quasi-arithmetic means in 2-variables and that simply implies the inequality in m-variables. The technique has some relation to Markov chains. This method can be applied to symmetrization and compounding means as well.

\noindent
\footnotetext{\noindent
AMS (2010) Subject Classifications: 26E60, 39B12 \\

Key Words and Phrases: generalized mean, iteration of mean, functional equation of mean}

\end{abstract}

\section{Introduction}
In this paper we are going to study the ways of extensions of an $n$-variable mean to an $m$-variable mean ($n<m$) and vice versa shrinking an $m$-variable mean to an $n$-variable mean.

The origin of the problem was raised by M. Hajja in \cite{hajja} Problem 14: is there a natural way of deriving the definition of the $n$-variable arithmetic mean from the definition of the 2-variable version. And what can one say in general? Can we define when an $n$-variable mean is concordant to an $m$-variable mean i.e. they are the different variable versions of the same mean (where "mean" in this last context is just a variableless generic notion). Can we go that far? We start to answer these questions by presenting both positive and negative results.

On basic facts on means the reader has to consult \cite{bullen}. However we provide some basic definitions.

A n-variable mean $K$ is called strictly internal if \[\min\{a_1,\dots,a_n\}<K(a_1,\dots,a_n)<\max\{a_1,\dots,a_n\}\] provided that the set $\{a_1,\dots,a_n\}$ has at least two distinct elements. 
An n-variable mean $K$ is said to be monotone if $a_i\leq b_i\ (1\leq i\leq n)$ implies that $K(a_1,\dots ,a_n)\leq K(b_1,\dots ,b_n)$. $K$ is called continuous if $a^{(i)}_k\to c^{(i)} (1\leq i\leq n)$ then $K(a^{(1)}_k,\dots,a^{(n)}_k)\to K(c^{(1)},\dots,c^{(n)})$ i.e. $K$ is continuous as an n-variable function. $K$ is symmetric if $K(a_1,\dots ,a_n)=K(a_{p(1)},\dots ,a_{p(n)})$ for all permutations $p:\{1,\dots n\}\to\{1,\dots n\}$.

All means considered in this paper are symmetric, strictly internal, monotone and continuous if we do not say otherwise. 

Sometimes we will denote a 2-variable mean by a $\circ$, so instead of $K(a,b)$ we will write $a\circ b$.

\begin{df}\label{d1}A 2-variable mean $\circ$ is called round if it fulfills functional equation $(a\circ k)\circ (k\circ b)=k$ for all $a,b\ (a<b)$ where $k=a\circ b$.
\end{df}

%----------------------------------------------------------------------------------------------------------------------------------------------Basic observations------------------------
\subsection{Some basic observations}
Unfortunately we cannot expect one generic, unique way to extend/shrink a mean such that it keeps concordance.
E.g. let us consider the following three 3-variable means defined on $\mathbb{R}^+$.

$K_1(a,b,c)=\sqrt[3]{abc}$

$K_2(a,b,c)=\sqrt{ab+ac+bc \over{3}}$

$K_3(a,b,c) = {\sqrt{ab}+\sqrt{ac}+\sqrt{bc} \over{3}}$

For all three means we may expect $K(a,b)=\sqrt{ab}$ being the corresponding 2-variable mean. But when we extended K, we cannot expect to get all three 3-variable means, or better to say there should be three extending methods at least. And in the opposite when we reduce the means $K_1, K_2, K_3$ to 2-variable means, we cannot expect one generic way. 

%------------------------------------------------------------------------------------------------------------------------------------------------------------------Extensions-------------
\section{Extensions}

We are going to extend means from n-variable to m-variable where $2\leq n<m$.

Let $m$ real numbers be given. We are going to describe a kind of recursive method when we create $m$ sequences from them in a way that a new element of a sequence is based on the $n$-mean of some of the previous step sequence elements and always from the same ones.

In order to describe such generic method we need some definitions first.

\begin{df}Let $I_m=\{1,\dots,m\}\ (m\in\mathbb{N})$. 

$I_m^n=\{(j_1,\dots,j_n):j_i\in I_m\}\ (n\in\mathbb{N},n<m)$.

If $t=(j_1,\dots,j_n)$ then by writing $j\in t$ we will mean that $j$ is one of its coordinates of $t$ i.e. in this context we think of $t$ as a set $\{j_1,\dots,j_n\}$. 

A partial order on $I^n_m$ is defined by $(j_1,\dots,j_n)\leq(k_1,\dots,k_n)\Longleftrightarrow j_1\leq k_1,\dots,j_n\leq k_n$.\qed
\end{df}

\begin{df}Let $n<m$. Let a $n$-variable mean $K$ be given and $a^{(1)},\dots,a^{(m)} \in \mathbb{R},\ (a^{(1)},\dots,a^{(m)})\in Dom\ K$ with $a^{(1)}\leq\dots \leq a^{(m)}$ . Let a system $T=\{t_1,\dots,t_m\}$ be given as well where $t_i\in I^n_m\ (1\leq i\leq m)$. Let us use the notation: $t_i=(j_{i,1},\dots,j_{i,n})$.

Now we define $m$ sequences in the following way.

Let $a^{(1)}_0=a^{(1)},\dots,a^{(m)}_0=a^{(m)}$.

Set $a^{(i)}_{k+1}=K(a^{(j_{i,1})}_k,\dots,a^{(j_{i,n})}_k)\ (1\leq i\leq m,k\in\mathbb{N})$.\qed
\end{df}

We also prefer the following four properties of $T$:

(1) $t_1\leq t_2\leq\dots\leq t_m$

(2) $\forall k\in I_m,\ |\{i:k\in t_i\}|=n$

(3) $\forall i\ \min t_i\leq i\leq\max t_i$; if $2\leq i\leq m-1$ then $\min t_i< i<\max t_i$

(4) $\forall i\geq 2\ \exists j<i$ such that $i\in t_j$.

\begin{df}$T=\{t_1,\dots,t_m\}$ is called admissible if it satisfies properties (1), (2), (3), (4).
\end{df}

\begin{thm}\label{t4} If $T$ is admissible for $(n,m)$ then all sequences $(a^{(i)}_k)$ converges to the same limit that is between the minimum and maximum of the underlying points.
\end{thm}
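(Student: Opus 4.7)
My plan is first to preserve the ordering of the coordinates under the recursion and then to squeeze the extreme coordinates $a^{(1)}_k$ and $a^{(m)}_k$ together via an $\omega$-limit-set argument driven by the combinatorial properties (3) and (4) of the admissible system $T$.

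To begin, an induction on $k$ shows that $a^{(1)}_k\le a^{(2)}_k\le\cdots\le a^{(m)}_k$: property (1) yields $j_{i,r}\le j_{i+1,r}$ coordinatewise, so the monotonicity of $K$ applied to the inductive hypothesis gives $a^{(i)}_{k+1}\le a^{(i+1)}_{k+1}$. Writing $m_k=a^{(1)}_k$ and $M_k=a^{(m)}_k$, since $j_{1,r}\ge 1$ and $j_{m,r}\le m$, the monotonicity and internality of $K$ force $m_k\le m_{k+1}$ and $M_{k+1}\le M_k$. Both sequences are monotone and remain in $[a^{(1)},a^{(m)}]$, so $m_k\uparrow m^*$ and $M_k\downarrow M^*$ for some $m^*\le M^*$.

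The remaining task is to show $m^*=M^*$. I would introduce $F(x)_i=K(x_{j_{i,1}},\dots,x_{j_{i,n}})$ and note that $F$ is continuous, preserves ordered $m$-tuples (by property (1) and monotonicity of $K$), and sends $[a^{(1)},a^{(m)}]^m$ to itself. The $\omega$-limit set $\Omega$ of the orbit $(a^{(\cdot)}_k)_k$ is then a nonempty, compact, forward $F$-invariant subset of the ordered $m$-tuples; and because $m_k\uparrow m^*$ and $M_k\downarrow M^*$ monotonically, every $z\in\Omega$ satisfies $z_1=m^*$ and $z_m=M^*$. Picking any $y^{(0)}\in\Omega$ and iterating $y^{(\ell+1)}=F(y^{(\ell)})$ yields a forward orbit inside $\Omega$, that is, ordered $m$-tuples with first coordinate $m^*$ and last coordinate $M^*$ for every $\ell$.

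The main obstacle is the final combinatorial step. Define $S_\ell=\{i:y^{(\ell)}_i=m^*\}$; orderedness forces $S_\ell=\{1,\dots,s_\ell\}$ with $1\le s_\ell\le m-1$. Strict internality of $K$, combined with $y^{(\ell)}_{j_{i,r}}\ge m^*$, implies $y^{(\ell+1)}_i=m^*$ iff every argument equals $m^*$, i.e.\ iff $\max t_i\le s_\ell$; property (1) makes this condition on $i$ itself a prefix, so $s_{\ell+1}$ is the largest such $i$. To obtain $s_{\ell+1}<s_\ell$ I would rule out $i=s_\ell$: if $2\le s_\ell\le m-1$, property (3) gives $\max t_{s_\ell}>s_\ell$; if $s_\ell=1$, property (4) applied at $i=2$ forces $2\in t_1$ and hence $\max t_1\ge 2>1$. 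Either way $s_{\ell+1}\le s_\ell-1$, making $(s_\ell)$ a strictly decreasing sequence of positive integers, which is impossible. This contradiction delivers $m^*=M^*$, after which the sandwich $m_k\le a^{(i)}_k\le M_k$ forces each $(a^{(i)}_k)$ to converge to the common limit $m^*=M^*\in[a^{(1)},a^{(m)}]$. Notably, property (2) appears unused; the argument rests on (1), (3), (4) alone.
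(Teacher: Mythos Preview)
Your argument is correct. Both you and the paper first establish that the ordering of coordinates is preserved (via (1) and monotonicity of $K$) and that $a^{(1)}_k$ increases while $a^{(m)}_k$ decreases (via (3) and internality), so the endpoints converge to some $m^*\le M^*$. From there, however, the strategies diverge.

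The paper argues by minimal counterexample on the index: assuming $a^{(1)}_k\to c$, it takes the least $i$ for which $(a^{(i)}_k)$ fails to converge to $c$, invokes property (4) to find $j<i$ with $i\in t_j$, extracts a convergent subsequence of all relevant coordinates, and derives a contradiction from strict internality and continuity, since $a^{(j)}_{k+1}=K(a^{(u_1)}_k,\dots,a^{(u_n)}_k)$ would have to both tend to $c$ and stay bounded above $c$ along that subsequence. Thus the paper's engine is property (4).

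Your engine is instead property (3), packaged in an $\omega$-limit-set argument: on any forward orbit inside $\Omega$ (where the first and last coordinates are pinned at $m^*$ and $M^*$), you show the prefix $\{i:y^{(\ell)}_i=m^*\}$ strictly shrinks at each step, using $\max t_i>i$ from (3) in the bulk and invoking (4) only once, at the boundary case $s_\ell=1$. What your approach buys is a clean dynamical picture and the explicit observation that (2) plays no role in this theorem; what the paper's minimal-index argument buys is a shorter route that avoids introducing $\omega$-limit machinery. Both are valid and neither dominates the other.
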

\begin{proof}Property (3) gives that $1\in t_1, m\in t_m$ and (1) implies that if $i\leq j$ then $\forall k\ a^{(i)}_k\leq a^{(j)}_k$.

Moreover $\forall k\ a^{(1)}\leq a^{(1)}_k\leq a^{(2)}_k\dots\leq a^{(m)}_k\leq a^{(m)}$ and $a^{(1)}_k$ is increasing and $a^{(m)}_k$ is decreasing hence both converges. We show that all converges to the same limit. Let $a^{(1)}_k\to c$. Suppose there is $i\geq 2$ such that $(a^{(i)}_k)$ does not converge to $c$. Let $i$ denote the least such index. Then by (4) there is $j<i$ such that $i\in t_j=(u_1,\dots,u_n)$. All sequences $(a^{(p)}_k)$ are bounded $(p\in t_j)$ hence we can find a subsequence of $(k)$ say $(k_q)$ such that all sequences $(a^{(p)}_{k_q})$ are convergent, say $(a^{(p)}_{k_q})\to w_p$. Obviously $w_p\geq c$. Let us choose $(k_q)$ such that $(a^{(i)}_{k_q})\to w_i\neq c$. By assumption $(a^{(j)}_{k_q})\to c$. By definition $(a^{(j)}_{k+1})=K(a^{(u_1)}_k,\dots,a^{(u_n)}_k)$. $K$ being strictly internal gives that $c<K(w_{u_1},\dots,w_{u_n})$. Let 
\[\epsilon=\frac{K(w_{u_1},\dots,w_{u_n})-c}{2}.\] 
$K$ is continuous therefore there exists $\delta>0$ such that $w'_{u_r}\in (w_{u_r}-\delta,w_{u_r}+\delta)\ (1\leq r\leq n)$ implies that 
\[K(w'_{u_1},\dots,w'_{u_n})\in (K(w_{u_1},\dots,w_{u_n})-\epsilon,K(w_{u_1},\dots,w_{u_n})+\epsilon).\] 
There is $N\in\mathbb{N}$ such that $k\geq N$ implies $a^{(j)}_k\in (c-\epsilon,c+\epsilon)$ and $q\geq N$ implies $a^{(p)}_{k_q}\in (w_{u_r}-\delta,w_{u_r}+\delta)\ (1\leq r\leq n)$. We get that 
\[a^{(j)}_{k_N+1}=K(a^{(u_1)}_{k_N},\dots,a^{(u_n)}_{k_N}) \in \big(K(w_{u_1},\dots,w_{u_n})-\epsilon,K(w_{u_1},\dots,w_{u_n})+\epsilon\big)\cap\big(c-\epsilon,c+\epsilon\big),\] which is a contradiction.
\end{proof}

\begin{df}If $K,\ a^{(1)},\dots,a^{(m)}$ are given, $T=T_{n,m}$ is admissible $(n<m)$ then let us denote the common limit point of the sequences $(a^{(i)}_k)$ by $K^{(T)}(a^{(1)},\dots,a^{(m)})$ which is a mean of $a^{(1)},\dots,a^{(m)}$.
\end{df}

\begin{rem}If $k$ is fixed, $a^{(i)}_k\ (1\leq i\leq m)$ can be considered as a function of $a^{(1)},\dots,a^{(m)}$ i.e. an $m$-variable function. We will use the notation $a^{(i)}_k\big(a^{(1)},\dots,a^{(m)}\big)$. 
\end{rem}

\begin{prp}\label{r2b}If $k\in\mathbb{N},\ 1\leq i\leq m,\ K$ is a continuous (monotone) mean then the function $a^{(i)}_k(x_1,\dots,x_m)$ is continuous (monotone) as well. 
\end{prp}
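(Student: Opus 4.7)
The plan is a routine induction on $k$, so I would state it as such and keep it brief.

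\textbf{Base case.} For $k=0$ we have $a^{(i)}_0(x_1,\dots,x_m)=x_i$, the $i$-th coordinate projection. Projections are continuous and (coordinatewise) monotone, so the statement holds for $k=0$ and all $i$.

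\textbf{Inductive step.} Suppose that for some fixed $k\in\mathbb{N}$ every function $a^{(i)}_k(x_1,\dots,x_m)$ is continuous (resp.\ monotone in each variable). By the defining recursion
\[
a^{(i)}_{k+1}(x_1,\dots,x_m)=K\bigl(a^{(j_{i,1})}_k(x_1,\dots,x_m),\dots,a^{(j_{i,n})}_k(x_1,\dots,x_m)\bigr),
\]
so $a^{(i)}_{k+1}$ is obtained by composing the $n$-variable function $K$ with the $n$-tuple of functions $a^{(j_{i,r})}_k$ of $(x_1,\dots,x_m)$. A composition of continuous functions is continuous, which handles the continuous case. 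For monotonicity, fix a coordinate $x_s$ and increase it: by the inductive hypothesis each $a^{(j_{i,r})}_k$ is non-decreasing in $x_s$, and since $K$ is monotone in each of its $n$ arguments, $K$ applied to a non-decreasing tuple is non-decreasing; hence $a^{(i)}_{k+1}$ is monotone in $x_s$. As $s$ was arbitrary, the inductive step is complete.

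\textbf{Obstacle, if any.} There is essentially no obstacle; the only thing worth noting is that ``monotone'' here means monotone coordinatewise in $(x_1,\dots,x_m)$, and one must verify that the hypothesis on $K$ (monotonicity in its $n$ arguments) does imply this stronger joint monotonicity of the composition. That is immediate from the standard observation that holding all but one outer variable fixed and increasing the remaining one causes each inner $a^{(j_{i,r})}_k$ to weakly increase, whereupon monotonicity of $K$ finishes the argument.
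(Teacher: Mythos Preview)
Your proof is correct and follows exactly the approach the paper indicates: the paper's own proof reads in full ``The statement for both attributes can be shown by induction on $k$.'' You have simply written out that induction explicitly, so there is nothing to add.
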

\begin{proof} The statement for both attributes can be shown by induction on $k$.
\end{proof}

\begin{rem}If $k$ is fixed, $a^{(i)}_k(x_1,\dots,x_m)\ (1\leq i\leq m)$ can be considered as an m-variable mean. However we are not going to discuss such means because they are not natural enough.
\end{rem}

\begin{thm}\label{tktc} If $K$ is strictly internal, monotone, continuous and $T$ is admissible then $K^{(T)}$ is strictly internal, monotone and continuous.
\end{thm}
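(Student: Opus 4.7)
The plan is to verify the three properties separately, leveraging two facts from the preceding material: Proposition~\ref{r2b} (every iterate $a^{(i)}_k$ is a coordinatewise monotone and jointly continuous $m$-variable function of the inputs) and the sandwich
\[a^{(1)}_k(x_1,\dots,x_m)\leq K^{(T)}(x_1,\dots,x_m)\leq a^{(m)}_k(x_1,\dots,x_m),\]
valid for every $k$, which is implicit in the proof of Theorem~\ref{t4} (where $a^{(1)}_k$ is shown to increase, and $a^{(m)}_k$ to decrease, to the common limit $K^{(T)}$).

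Monotonicity is immediate: $K^{(T)}$ is the pointwise limit of the coordinatewise monotone functions $a^{(1)}_k$, and pointwise limits preserve coordinatewise monotonicity. For continuity I would run a standard $\liminf/\limsup$ argument off the sandwich. Fix convergent inputs $\mathbf{x}_\nu\to\mathbf{x}^{*}$; for each fixed $k$ Proposition~\ref{r2b} gives $a^{(1)}_k(\mathbf{x}_\nu)\to a^{(1)}_k(\mathbf{x}^{*})$, hence $\liminf_\nu K^{(T)}(\mathbf{x}_\nu)\geq a^{(1)}_k(\mathbf{x}^{*})$, and letting $k\to\infty$ yields $\liminf_\nu K^{(T)}(\mathbf{x}_\nu)\geq K^{(T)}(\mathbf{x}^{*})$. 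The dual sandwich with $a^{(m)}_k$ caps $\limsup_\nu K^{(T)}(\mathbf{x}_\nu)$ by $K^{(T)}(\mathbf{x}^{*})$, giving continuity.

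The main obstacle is strict internality, because Theorem~\ref{t4} only yields the weak bounds $\min\leq K^{(T)}\leq\max$. Assuming the inputs are not all equal, the sorting convention gives $a^{(1)}<a^{(m)}$, and since $a^{(1)}_k$ is non-decreasing with limit $K^{(T)}$ it suffices to exhibit a single $k_{0}$ with $a^{(1)}_{k_{0}}>a^{(1)}$. I would track the ``above-the-floor'' set $S_k=\{i:a^{(i)}_k>a^{(1)}\}$; by the ordering $a^{(1)}_k\leq\dots\leq a^{(m)}_k$ it is an upper segment $\{q_k,\dots,m\}$, and $m\in S_0$ gives $q_0\leq m$. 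Whenever $q_k\geq 2$, property~(4) supplies $j<q_k$ with $q_k\in t_j$; then the update $a^{(j)}_{k+1}=K(a^{(l)}_k:l\in t_j)$ has one argument (namely $a^{(q_k)}_k$) strictly above $a^{(1)}$ and all remaining arguments $\geq a^{(1)}_k\geq a^{(1)}$, so strict internality of $K$ forces $a^{(j)}_{k+1}>a^{(1)}$ and therefore $q_{k+1}\leq j<q_k$. Finitely many descents give $q_{k_{0}}=1$, whence $K^{(T)}>a^{(1)}$.

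The symmetric inequality $K^{(T)}<a^{(m)}$ follows by the analogous upward descent on $S'_k=\{i:a^{(i)}_k<a^{(m)}\}$, which by monotonicity is an initial segment $\{1,\dots,q'_k\}$ nonempty because $1\in S'_0$. When $q'_k+1\leq m-1$, property~(3) directly provides a coordinate of $t_{q'_k+1}$ that is $<q'_k+1$, hence $\leq q'_k$, dragging $a^{(q'_k+1)}_{k+1}$ below $a^{(m)}$. The one delicate case is $q'_k=m-1$, where one needs $t_m$ to contain some coordinate $<m$; this is a short corollary of~(2) and~(4), since (4) forces $m\in t_j$ for some $j<m$ and hence, by~(2), at most $n-1$ coordinates of $t_m$ can equal $m$. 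Strict internality of $K$ then closes this last case the same way.
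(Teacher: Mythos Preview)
Your argument is essentially correct and close in spirit to the paper's. Monotonicity is handled identically. For continuity the paper runs an $\varepsilon$--$\delta$ argument that also invokes the monotonicity of the iterates, while your sequential $\liminf/\limsup$ sandwich is a clean equivalent. For strict internality the paper tracks the \emph{largest} index still stuck at $a^{(1)}$ and pushes it down using property~(3) (that $\max t_i>i$ for interior $i$), whereas you track the smallest index already above $a^{(1)}$ and pull it down using property~(4); these are mirror images of the same descent.

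There is one genuine slip. Your claim that ``by~(2), at most $n-1$ coordinates of $t_m$ can equal $m$'' misreads property~(2): it counts how many \emph{indices} $i$ satisfy $m\in t_i$, not the multiplicity of $m$ inside a single tuple. Knowing that $m\in t_j$ for some $j<m$ and that exactly $n$ of the $t_i$ contain $m$ says nothing, by itself, about how many coordinates of $t_m$ equal $m$. The conclusion you want is nevertheless true in the extension setting: summing property~(2) over $k$ gives
\[
\sum_{k=1}^{m}\bigl|\{i:k\in t_i\}\bigr|=mn=\sum_{i=1}^{m} n,
\]
while the left side also equals $\sum_{i=1}^{m}|\{k:k\in t_i\}|\le \sum_{i=1}^{m} n$, with equality forcing every $t_i$ to have $n$ \emph{distinct} entries. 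In particular $t_m$ has an entry $<m$, and your upper-bound descent goes through. With this correction your proof is complete.
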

\begin{proof} Let $K$ be strictly internal and $a^{(1)}\leq\dots\leq a^{(m)}$, moreover let $a^{(1)}<a^{(m)}$ hold. First we are going to show that $a^{(1)}<a^{(1)}_k,\ a^{(m)}_k<a^{(m)}$ hold for some $k\in\mathbb{N}$. We show it for the first inequality, the second is similar.

Assume the contrary: $\forall k\ a^{(1)}=a^{(1)}_k$. Now let us examine the points $a^{(1)}_k,\dots,a^{(m)}_k$ and let $l_k$ be the greatest index for which $a^{(1)}=a^{(l_k)}_k\ (k\in\mathbb{N}\cup\{0\})$. If $l_k=1$ for some $k$ then obviously $a^{(1)}<a^{(1)}_{k+1}$ that is a contradiction. However if $l_k>1$ then by property (3) $a^{(1)}<a^{(l_k)}_{k+1}$. Then we get that $l_{k+1}<l_k$. Hence $l_k=1$ has to hold for some $k$ and we get a contradiction.

Property (3) gives that $1\in t_1,m\in t_m$ hence $(a^{(1)}_k)$ is increasing and $(a^{(m)}_k)$ is  decreasing. Which yield strict internality of $K^{(T)}$.

\smallskip

If $K$ is monotone then $a^{(i)}\leq b^{(i)}\ (1\leq i\leq m)$ implies that $\forall k\ a^{(i)}_k\leq b^{(i)}_k$ where $(a^{(i)}_k)$ are the sequences belonging to the points $a^{(1)},\dots,a^{(m)}$, while $(b^{(i)}_k)$ are the sequences belonging to the points $b^{(1)},\dots,b^{(m)}$. Now if $a^{(i)}_k\to a,b^{(i)}_k\to b$ then $a\leq b$ hence $K^{(T)}$ is monotone.

\smallskip

In order to prove that $K^{(T)}$ is continuous let $a^{(1)}\leq\dots\leq a^{(m)}$ be given and let $p=K^{(T)}(a^{(1)},\dots,a^{(m)})$. For $\epsilon>0$ we can find $N$ such that $k>N$ implies that $p-\epsilon< a^{(1)}_k\leq p\leq a^{(m)}_k< p+\epsilon$. By Proposition \ref{r2b} $a^{(1)}_N(a^{(1)},\dots,a^{(m)}),\ a^{(m)}_N(a^{(1)},\dots,a^{(m)})$ are both continuous functions of $a^{(1)},\dots,a^{(m)}$ hence there is $\delta>0$ such that if $\forall i\ b^{(i)}\in [a^{(i)}-\delta,a^{(i)}+\delta]$ then 
\[p-\epsilon\leq a^{(1)}_N(b^{(1)},\dots,b^{(m)})\leq a^{(m)}_N(b^{(1)},\dots,b^{(m)})\leq p+\epsilon\] (the intervals are closed deliberately). If $k>N$ then 
\[p-\epsilon\leq a^{(1)}_N(a^{(1)}-\delta,\dots,a^{(m)}-\delta)\leq a^{(1)}_k(a^{(1)}-\delta,\dots,a^{(m)}-\delta)\leq a^{(1)}_k(b^{(1)},\dots,b^{(m)})\leq\]
\[\leq a^{(m)}_k(b^{(1)},\dots,b^{(m)})\leq  a^{(m)}_k(a^{(1)}+\delta,\dots,a^{(m)}+\delta)\leq a^{(m)}_N(a^{(1)}+\delta,\dots,a^{(m)}+\delta)\leq p+\epsilon\] 
where we used that $(a^{(1)}_k)$ is increasing, $(a^{(m)}_k$) is decreasing and $a^{(1)}_k,a^{(m)}_k$ are monotone. Hence we can conclude that $K^{(T)}$ is continuous.
\end{proof}

\begin{thm}There exists an admissible $T$ for $(n,m)\ (2\leq n<m,\ n,m\in\mathbb{N})$.
\end{thm}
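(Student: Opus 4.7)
My plan is to prove existence by giving an explicit formula for an admissible system $T$ and verifying the four properties directly. For $1 \leq i \leq m$ and $1 \leq j \leq n$, I would define the $j$-th (sorted) coordinate of the $i$-th tuple by
\[
t_i[j] \;=\; \max\bigl(j,\, \min(m - n + j,\, i + 2j - n - 1)\bigr).
\]
The linear value $i + 2j - n - 1$ is clamped between the natural lower bound $j$ (required by sortedness and distinctness of the coordinates) and the upper bound $m - n + j$ (required because $t_i$ must be an $n$-subset of $I_m$). This yields the extremal tuples $t_1 = (1, 2, \ldots, n)$ and $t_m = (m - n + 1, \ldots, m)$, and each intermediate tuple straddles the index $i$ as required.

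Property (1) is immediate because the inner expression is non-decreasing in $i$, so the clamped value $t_i[j]$ is too. For (3), one reads off from the formula that $\min t_i = t_i[1] = \max(1, i - n + 1)$ and $\max t_i = t_i[n] = \min(m, i + n - 1)$, which give the strict inequalities $\min t_i < i < \max t_i$ for $2 \leq i \leq m - 1$ (using $n \geq 2$) and the weak inequalities at the endpoints. For (4), when $i \geq n$ one can take $j = i - n + 1 < i$ and check directly that $t_j[n] = i$; when $2 \leq i < n$, the index $i$ already belongs to $t_1 = (1, \ldots, n)$.

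The main obstacle is verifying (2). For each $k \in I_m$, I would count $N(k) = \#\{(i, \ell) : t_i[\ell] = k\}$ by observing that the formula partitions each column $\ell$ into three regions: $t_i[\ell] = \ell$ on a prefix of $i$ (``region 1''), $t_i[\ell] = i + 2\ell - n - 1$ on a middle block (``region 2''), and $t_i[\ell] = m - n + \ell$ on a suffix (``region 3''). In each region the $i$'s giving $t_i[\ell] = k$ are explicit, and splitting into the cases $k$ small, $k$ middle (when $m > 2n$), and $k$ large---with the boundary cases handled by combining contributions across regions---the three pieces telescope to exactly $n$ in every case. This case analysis is the only part of the proof that requires real, though entirely routine, computation.
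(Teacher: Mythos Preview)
Your approach is correct. The case analysis for property~(2) works as you indicate: for a fixed column $\ell$, the value $t_i[\ell]$ equals $\ell$ for the $n+1-\ell$ smallest indices $i$, equals $m-n+\ell$ for the $\ell$ largest, and steps through the intermediate values exactly once each in between; summing the contributions over $\ell$ gives $N(k)=n$ for every $k$, with the bookkeeping splitting according to whether $k\le n$, $k>m-n$, both, or neither. The remaining properties are verified exactly as you say.

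The paper proceeds differently: it writes down the $n=2$ system explicitly and then inducts on $n$, building an admissible system for $(n,m)$ from one for $(n-1,m-1)$ by shifting each existing coordinate up by one, prepending a suitable new first coordinate to each tuple, and adjoining the top tuple $(m-n+1,\dots,m)$. Each of the four properties then reduces to a one-line inductive check, avoiding any counting. It is worth noting that the two constructions in fact coincide---your formula satisfies the recursion $t'_i[j]=t_i[j-1]+1$ for $j\ge 2$ under the substitution $(n,m)\mapsto(n-1,m-1)$, and the first coordinate and the final tuple match as well---so your closed form is precisely what the paper's induction produces. Your route is more explicit about the shape of the system, at the price of the case analysis for~(2); the paper's inductive route verifies admissibility more cleanly but leaves the final $T$ implicit.
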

\begin{proof}
We show a way how to construct such $T$. 
First we present an admissible $T$ for n=2 i.e. for $(2,m)$:
Let $t_1=(1,2),\ t_k=(k-1,k+1)\ (2\leq k \leq m-1),\ t_m=(m-1,m)$. One can readily check that it has properties (1),(2),(3),(4).

Then we go on by recursion on $n$. 
Let us suppose we have an admissible $T=\{t_1,\dots,t_{m-1}\}$ for $\big((n-1),(m-1)\big)$ and we construct $T'$ for $(n,m)$. Let $t_i=(j_{i,1},\dots,j_{i,n-1})\ (1\leq i\leq m-1)$. Let us define $t'_i\in T'\ (1\leq i\leq m)$
\[t'_i=\begin{cases}
(1,j_{i,1}+1,\dots,j_{i,n-1}+1) & \text{if }1\leq i\leq n-1\\
(i-(n-1),j_{i,1}+1,\dots,j_{i,n-1}+1) & \text{if } n\leq i\leq m-1\\
(m-n+1,m-n+2,\dots,m) & \text{if } i=m
\end{cases}\]

We now show that $T'$ is admissible.

Obviously $\forall i\ t'_i\in I^n_m$.

In the definition of $t'_i$ let us call the elements of $I^n_m$ in the first line: type 1, in the second line: type 2, in the third line: type 3 elements. 

(1): If $T$ satisfies (1) then so does $T'$ using also that $t'_m$ is the greatest element in $I_m^n$.

(2): If $i=1$ it is clear. 

If $2\leq i\leq m-n$ then we know that $|\{h:i-1\in t_h\in T\}|=n-1$. Take the type 2 element that starts with $i$. With that element we get $|\{h:i\in t'_h\in T'\}|=n$. 

If $m-n+1\leq i$ then the type 3 element will provide the missing point.

(3): It is easy to check for type 1, 2 and 3 elements.

(4): If $i=2$ then $t'_1$ will satisfies the condition. If $3\leq i\leq m$ is given, we know that there is $j\in I_{m-1}, j<i-1$ such that $(i-1)\in t_j$. Hence $i\in t'_j$.
\end{proof}

The following theorem gives that the $n$-variable quasi-arithmetic means are concordant in this way.

\begin{thm}\label{th7}For a quasi-arithmetic $n$-variable mean $K$, $K^{(T_{n,m})}$ is the associated $m$-variable quasi-arithmetic mean.
\end{thm}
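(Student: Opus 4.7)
The plan is to reduce to the arithmetic-mean case by conjugating with the representing function, then exploit the linear structure of the recursion together with property (2) of an admissible $T$.

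Write $K(x_1,\dots,x_n)=f^{-1}\bigl(\tfrac{1}{n}\sum_{r=1}^n f(x_r)\bigr)$ for some continuous strictly monotone $f$. Setting $b^{(i)}_k=f(a^{(i)}_k)$, the recursion defining the sequences becomes
\[
b^{(i)}_{k+1}=\frac{1}{n}\sum_{r=1}^n b^{(j_{i,r})}_k,\qquad 1\leq i\leq m,
\]
so the vector $\mathbf{b}_k=(b^{(1)}_k,\dots,b^{(m)}_k)^\top$ satisfies $\mathbf{b}_{k+1}=A\mathbf{b}_k$, where $A$ is the $m\times m$ matrix whose $i$-th row has entry $1/n$ in position $j$ for each $j\in t_i$ (with multiplicity) and $0$ elsewhere. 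In particular $\mathbf{b}_k=A^k\mathbf{b}_0$ with $\mathbf{b}_0=(f(a^{(1)}),\dots,f(a^{(m)}))^\top$.

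The key observation is that the row sums of $A$ are all $1$ (obvious), and property (2) of admissibility says exactly that each index $k\in I_m$ belongs to $|\{i:k\in t_i\}|=n$ of the tuples $t_i$, which means every column sum of $A$ also equals $n\cdot\tfrac{1}{n}=1$. Thus $A$ is doubly stochastic, and in particular $\mathbf{1}^\top A=\mathbf{1}^\top$, so
\[
\sum_{i=1}^m b^{(i)}_k=\sum_{i=1}^m b^{(i)}_0=\sum_{i=1}^m f(a^{(i)})\qquad \text{for every }k.
\]

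By Theorem \ref{t4} applied to $K$ (which, being quasi-arithmetic, is strictly internal, monotone and continuous, so Theorem \ref{t4} applies to the $a^{(i)}_k$; equivalently $f$ is a homeomorphism, so the $b^{(i)}_k$ also all share a common limit $c$). Passing to the limit in the preserved sum gives $mc=\sum_{i=1}^m f(a^{(i)})$, hence
\[
K^{(T_{n,m})}(a^{(1)},\dots,a^{(m)})=\lim_{k\to\infty}a^{(i)}_k=f^{-1}(c)=f^{-1}\!\left(\frac{1}{m}\sum_{i=1}^m f(a^{(i)})\right),
\]
which is precisely the associated $m$-variable quasi-arithmetic mean. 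The only nontrivial step is recognizing that property (2) translates into column-stochasticity of $A$ and therefore into conservation of the quantity $\sum_i f(a^{(i)}_k)$; once this is spotted, everything else is a direct computation.
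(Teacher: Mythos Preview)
Your proof is correct, and it is genuinely different from the paper's argument. Both proofs linearize the recursion by setting $b^{(i)}_k=f(a^{(i)}_k)$ and both form the doubly stochastic matrix $A$ (the paper calls it $M$). From that point they diverge. The paper proceeds by interpreting $A$ as the transition matrix of a Markov chain, verifies from properties (2), (3), (4) that the chain is doubly stochastic, irreducible, aperiodic and positive recurrent, and then invokes the ergodic theorem for such chains to conclude $(A^k)_{i,l}\to 1/m$; this identifies the limit directly. You instead use only the column-stochasticity (property~(2)) to obtain the conserved quantity $\sum_i b^{(i)}_k$, and you delegate the convergence itself to the already-proven Theorem~\ref{t4}. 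Your route is more elementary---no Markov-chain machinery is needed, and the identification of the limit becomes a one-line computation once the invariant is spotted. The paper's route, on the other hand, is more self-contained in that the Markov-chain argument simultaneously re-establishes convergence and pins down the limiting coefficients, and it makes explicit how each admissibility axiom is used. Both are valid; yours is the shorter path given that Theorem~\ref{t4} is already available.
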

\begin{proof}If $K$ is quasi-arithmetic than there is a strictly monotone, continuous function $f$ such that 
\[K(b^{(1)},\dots,b^{(n)})=f^{-1}\Big(\frac{f(b^{(1)})+\dots+f(b^{(n)})}{n}\Big).\]

Let $a^{(1)},\dots,a^{(m)}$ be given.

Let $T=\{t_1,\dots,t_m\},\ t_i=(j_{i,1},\dots,j_{i,n})$, $a^{(i)}_{0}=a^{(i)},\ a^{(i)}_{k+1}=a^{(i)}_{k+1}(T)=K\big(a^{(j_{i,1})}_k,\dots,a^{(j_{i,n})}_k\big)\ \ \ (1\leq i\leq m,\ k\in\mathbb{N}\cup\{0\})$.

First we show that there exist coefficients $s^{i,k}_l\ (1\leq i,l\leq m,\ k\in\mathbb{N})$ such that 
\[a^{(i)}_{k}=f^{-1}\big(s^{i,k}_1 f(a^{(1)})+\dots+s^{i,k}_m f(a^{(m)})\big)\]
and $0\leq s^{i,k}_l\leq 1$, $\forall i\forall k\ s^{i,k}_1+\dots+s^{i,k}_m=1$ holds. Clearly 
\[s^{i,1}_l=
\begin{cases}\frac{1}{n}&\text{if }l\in t_i\\ 
0&\text{otherwise}
\end{cases}\]
We go on by induction and suppose the assertion is true for $k$. 
\[a^{(i)}_{k+1}=f^{-1}\Big(\frac{f(a^{(j_{i,1})}_k)+\dots+f(a^{(j_{i,n})}_k)}{n}\Big)=\]
\begin{equation}\label{eq1}f^{-1}\Big(\frac{s^{j_{i,1},k}_1 f(a^{(1)})+\dots+s^{j_{i,1},k}_m f(a^{(m)})+\dots+s^{j_{i,n},k}_1 f(a^{(1)})+\dots+s^{j_{i,n},k}_m f(a^{(m)})}{n}\Big).\end{equation}
In the numerator if we calculate the coefficient of $f(a^{(i)})$ it will be non-negative and the sum of all coefficients will be $n$ which altogether give the statement.

Now we going to express those factors $s^{i,k}_l$ in a useful way. 
First let us define the following $m \times m$ matrix M: 

\begin{equation}\label{eq2}
M_{i,l}=\begin{cases}
\frac{1}{n} & \text{if } l\in t_i\\
0 & \text{otherwise}
\end{cases}.
\end{equation}
Clearly $s^{i,1}_l=M_{i,l}$.
We show by induction that $s^{i,k}_l=(M^k)_{i,l}$ when $k\in\mathbb{N}$. Suppose it is true for $k$. By equation (\ref{eq1}) we get that in $a^{(i)}_{k+1}$ the coefficient of $f(a^{(l)})$ equals to 
\[s^{i,k+1}_l=\frac{s^{j_{i,1},k}_l+\dots+s^{j_{i,n},k}_l}{n}=\frac{(M^k)_{j_{i,1},l}+\dots+(M^k)_{j_{i,n},l}}{n}=\big(M\cdot (M^k)\big)_{i,l}.\]

Our next aim is to prove that $\forall i \forall j\ \lim\limits_{k\to\infty}s^{i,k}_j\to \frac{1}{m}$ that would prove the theorem completely since $f^{-1}$ is continuous.

Let us have a stationary Markov chain $(X_k)_{k\in\mathbb{N}}$ with states $1,\dots,m$ and with transition matrix $M$ defined in equation (\ref{eq2}). In the theory of Markov chains there is a theorem that says that for an irreducible, aperiodic, positive recurrent and doubly stochastic Markov chain with $m$ states it holds that 
$\lim\limits_{k\to\infty} P(X_k=l)=\frac{1}{m}$ for any state $l$. This would prove the our theorem since it means that $\lim\limits_{k\to\infty}s^{i,k}_l=\lim\limits_{k\to\infty}(M^k)_{i,l}=\frac{1}{m}$ for all $i,l$.

Therefore we only have to show that $M$ has all required properties:

By property (2) of $T$, M is doubly stochastic.

M is irreducible since there is only one communication class because state "$1$"  and state "$j$" communicate ($\forall j>1$)  by property (4) of $T$.

Aperiodic: By property (3) of $T$, $t_1=(1,\dots)$ therefore $p^{(1)}_{11}>0$ hence for state "1" the period is 1 and all states in a communication class have the same period.

Positive recurrent: An irreducible finite-state Markov chain is always positive recurrent. 
\end{proof}

We can also answer one of the questions of Hajja, namely: is there a natural way to derive the $n$-variable arithmetic mean from the 2-variable arithmetic mean? Our method just provides that (use Theorem \ref{th7} with $f(x)=x$).

\begin{ex}Property (2) cannot be abandoned if we want to keep Theorem \ref{th7} valid.
\end{ex}
\begin{proof}Let $n=2,m=4,\ K(a,b)=\frac{a+b}{2},\ t_1=(1,2),t_2=(1,3),t_3=(1,4),t_4=(3,4)$. One can readily check that properties (1),(3),(4) are satisfied, (2) is not. Let $a^{(1)}=0,a^{(2)}=1,a^{(3)}=1,a^{(4)}=1$. Easy calculation shows that $a^{(4)}_4=0.6875<\frac{0+1+1+1}{4}=0.75$ and because $(a^{(4)}_k)$ is decreasing, $K^{(T)}$ is not the 4-variable arithmetic mean.
\end{proof}

\begin{ex}Properties (1),(2),(3),(4) do not imply that $T$ is unique i.e. for given pair $(n,m)$ there can be more than one such system.
\end{ex}
\begin{proof}Let $n=3,m=5$.

System 1: $t_1=(1,2,3),\ t_2=(1,2,4),\ t_3=(1,3,5),\ t_4=(2,4,5),\ t_5=(3,4,5)$.

System 2: $t_1=(1,2,4),\ t_2=(1,3,4),\ t_3=(1,3,5),\ t_4=(2,3,5),\ t_5=(2,4,5)$.
\end{proof}

\begin{prp}For $n=2$ there is a unique system $T$ for $(n,m)$ which satisfies properties (1),(2),(3),(4).
\end{prp}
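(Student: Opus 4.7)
The plan is to write each tuple in sorted form $t_i=(a_i,b_i)$ with $a_i\leq b_i$, and first observe that property (2) forces strict inequality: double-counting pair membership gives $\sum_i|t_i|=\sum_k|\{i:k\in t_i\}|=mn=2m$, and since $|t_i|\leq 2$ this yields $|t_i|=2$ for every $i$, i.e.\ $a_i<b_i$. Interpreted in the pairwise partial order, property (1) simply says that both coordinate sequences $(a_i)$ and $(b_i)$ are non-decreasing. Reading property (3) at $i=1$ and $i=m$ immediately gives $a_1=1$ and $b_m=m$.

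Next I would pin down $t_1$ via property (4): applied at $i=2$, the only candidate is $j=1$, so $2\in t_1=(1,b_1)$, forcing $b_1=2$ and hence $t_1=(1,2)$.

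The heart of the argument is an induction on $i$ for $2\leq i\leq m-1$, with inductive hypothesis that $t_1=(1,2)$ and $t_j=(j-1,j+1)$ for all $2\leq j\leq i-1$ (vacuous when $i=2$). From this hypothesis the occurrence counts in $t_1,\dots,t_{i-1}$ are transparent: every value $k$ with $1\leq k\leq i-2$ has appeared exactly twice, while $i-1$ and $i$ have appeared once and larger values not at all. Monotonicity gives $a_i\geq a_{i-1}$, property (3) gives $a_i<i$, and the occurrence cap from property (2) forbids $a_i$ from equalling any value already used twice; together these three constraints pin down $a_i=i-1$. For $b_i$, I would invoke property (4) at the index $i+1$: it requires $i+1\in t_j$ for some $j\leq i$, and since every $t_j$ with $j\leq i-1$ has maximum coordinate at most $i$, the only possibility is $b_i=i+1$.

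Finally, the case $i=m$ is analogous: property (3) together with $a_m<b_m$ forces $b_m=m$, and the same monotonicity-plus-occurrence-cap reasoning applied to $a_m$ (now using that the values $1,\dots,m-2$ have all been used twice) gives $a_m=m-1$. The main obstacle is determining $b_i$ in the inductive step, where property (4) has to be invoked \emph{one index ahead}; what makes this work is the inductively verified fact that the previously determined tuples contain no value exceeding $i$, leaving $t_i$ as the only possible host for $i+1$.
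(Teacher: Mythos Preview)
Your argument is correct and follows essentially the same inductive route as the paper: pin down $t_1=(1,2)$ from properties (3) and (4), then determine each $t_i$ in turn by using (4) one index ahead to fix the larger coordinate and properties (1)--(3) to fix the smaller one, finishing with $t_m=(m-1,m)$. Your version is somewhat more carefully organised --- the preliminary double-counting step ruling out repeated coordinates and the explicit bookkeeping of occurrence counts are made explicit where the paper leaves them implicit --- but the underlying idea is the same.
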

\begin{proof}By (3),(4) $t_1=(1,2)$. By (2) there is $t_i=(1,s)$ with some $s$. By (2) $t_1,t_i$ are the only elements containing 1. By (1) $i=2$. By (4) $s=3$. By (4) $4\in t_3$. By (2),(3) $2\in t_3$ i.e. $t_3=(2,4)$. We can go on by induction and get $t_i=(i-1,i+1)\ (2\leq i\leq m-1)$. By (1) we get that $t_m$ has to be $(m-1,m)$.
\end{proof}

\begin{prp}\label{pnmine}If $K_1, K_2$ are two n-variable means and $K_1\leq K_2$ then $K_1^{(T_{n,m})}\leq K_2^{(T_{n,m})}$.
\end{prp}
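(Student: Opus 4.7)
The plan is to prove this by a direct induction on the iteration index $k$, comparing the two families of sequences generated from the same admissible $T$ and the same initial data. Write $(a^{(i)}_k)$ for the sequences built from $K_1$ and $(b^{(i)}_k)$ for those built from $K_2$, with $a^{(i)}_0 = b^{(i)}_0 = a^{(i)}$ for $1\leq i\leq m$. My claim to prove by induction is that $a^{(i)}_k \leq b^{(i)}_k$ for all $i$ and all $k$.

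The base case $k=0$ holds trivially by construction. For the inductive step, assuming $a^{(j)}_k \leq b^{(j)}_k$ for every $j$, I chain together two inequalities at coordinate $i$: first,
\[
a^{(i)}_{k+1} = K_1\bigl(a^{(j_{i,1})}_k,\dots,a^{(j_{i,n})}_k\bigr) \leq K_2\bigl(a^{(j_{i,1})}_k,\dots,a^{(j_{i,n})}_k\bigr)
\]
by the hypothesis $K_1\leq K_2$, and then, by the monotonicity of $K_2$ applied componentwise using the inductive hypothesis,
\[
K_2\bigl(a^{(j_{i,1})}_k,\dots,a^{(j_{i,n})}_k\bigr) \leq K_2\bigl(b^{(j_{i,1})}_k,\dots,b^{(j_{i,n})}_k\bigr) = b^{(i)}_{k+1}.
\]
This gives $a^{(i)}_{k+1}\leq b^{(i)}_{k+1}$ and completes the induction.

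Finally, Theorem~\ref{t4} guarantees that both families converge, with $a^{(i)}_k \to K_1^{(T_{n,m})}(a^{(1)},\dots,a^{(m)})$ and $b^{(i)}_k \to K_2^{(T_{n,m})}(a^{(1)},\dots,a^{(m)})$ for every $i$. Passing to the limit in the componentwise inequality $a^{(i)}_k\leq b^{(i)}_k$ preserves the inequality, yielding the desired conclusion at the arbitrary point $(a^{(1)},\dots,a^{(m)})$.

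There is essentially no obstacle here; the only subtle point to flag is that the induction step genuinely requires both ingredients — the global pointwise bound $K_1\leq K_2$ to switch means at a fixed argument, and the monotonicity of (either) $K_2$ to propagate the componentwise inequalities on the arguments through the new mean. Either alone would not close the step.
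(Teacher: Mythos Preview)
Your proof is correct and is precisely the argument the paper has in mind: its one-line proof ``The associated sequences satisfy the same inequality'' is exactly the induction you carry out, and your passage to the limit via Theorem~\ref{t4} is the natural conclusion. You have simply made explicit the two ingredients (the pointwise bound $K_1\leq K_2$ and the standing monotonicity assumption on the means) that the paper leaves implicit.
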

\begin{proof} The associated sequences satisfy the same inequality.
\end{proof}

Now we can formulate one of our main results namely that an inequality between quasi-arithmetic means is enough to check in 2 variables only.

\begin{thm}\label{tm2}If $K_1,K_2$ are n-variable quasi-arithmetic means and $K_1\leq K_2$ holds in n variables then $K^{(m)}_1\leq K^{(m)}_2$ holds as well where $K^{(m)}_i$ denotes the associated m-variable quasi-arithmetic mean $( n<m)$.
\end{thm}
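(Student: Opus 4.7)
The plan is to combine two previously established results: Theorem~\ref{th7}, which identifies the extension $K^{(T_{n,m})}$ of an $n$-variable quasi-arithmetic mean with the associated $m$-variable quasi-arithmetic mean, and Proposition~\ref{pnmine}, which transfers pointwise inequalities between $n$-variable means to their $T$-extensions through the recursively defined sequences.

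First I would fix an admissible system $T = T_{n,m}$ for the pair $(n,m)$, whose existence was established earlier by the explicit inductive construction. Applying Theorem~\ref{th7} separately to $K_1$ and to $K_2$ yields
\[
K_i^{(T_{n,m})} \;=\; K_i^{(m)} \qquad (i = 1, 2).
\]
The hypothesis $K_1 \le K_2$ in $n$ variables, fed into Proposition~\ref{pnmine}, gives $K_1^{(T_{n,m})} \le K_2^{(T_{n,m})}$. Chaining these two identifications with the inequality produces $K_1^{(m)} \le K_2^{(m)}$, which is the desired conclusion.

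There is essentially no obstacle in the present step: the conceptual work has already been absorbed into Theorem~\ref{th7}, whose Markov-chain limiting argument identifies the $T$-extension of a quasi-arithmetic mean with the genuine $m$-variable quasi-arithmetic mean, and into Proposition~\ref{pnmine}, whose proof observes that the sequences $(a^{(i)}_k)$ associated with $K_1$ and with $K_2$ satisfy the pointwise inequality at every step. The substance of the present theorem is therefore its corollary rather than its proof: iterating starting from $n=2$ reduces the verification of an inequality between two $m$-variable quasi-arithmetic means to its two-variable restriction, where the inequality is usually trivial to check directly from the generating functions.
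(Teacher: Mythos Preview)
Your argument is correct and matches the paper's own proof, which simply cites Theorem~\ref{th7} and Proposition~\ref{pnmine}. You have merely spelled out in more detail how these two ingredients combine.
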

\begin{proof} Theorem \ref{th7} and Proposition \ref{pnmine}.
\end{proof}

\begin{prp}\label{tfsnqc}If $a^{(1)}<a^{(m)},\ \max t_1\leq\min t_m$ then \newline$\forall k\ a^{(1)}_k < K^{(T)}(a^{(1)},\dots,a^{(m)}) < a^{(m)}_k$ i.e. $(a^{(1)}_k),(a^{(m)}_k)$ are not quasi-constant.
\end{prp}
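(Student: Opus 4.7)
My plan is to reduce the desired strict $L$-sandwich to the ``width'' inequality $a^{(1)}_k < a^{(m)}_k$ at every step, and then conclude via a shift-invariance observation. Setting $L := K^{(T)}(a^{(1)},\dots,a^{(m)})$, the key remark is that restarting the recursion from the current vector $(a^{(1)}_k,\dots,a^{(m)}_k)$ merely produces the tails of the original sequences, so $K^{(T)}(a^{(1)}_k,\dots,a^{(m)}_k) = L$ for every $k$. Once the width inequality is in hand, strict internality of $K^{(T)}$ (Theorem \ref{tktc}) immediately yields $a^{(1)}_k < L < a^{(m)}_k$, which is the stated conclusion.

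So the task reduces to proving $a^{(1)}_k < a^{(m)}_k$ for all $k$ by induction on $k$, the base $k=0$ being the hypothesis. For the inductive step I would set $p = \max t_1$ and $q = \min t_m$; the separation hypothesis is $p \leq q$. Since every index appearing in $t_1$ is at most $p$ and every index in $t_m$ is at least $q$, sortedness $a^{(1)}_k \leq \dots \leq a^{(m)}_k$ combined with the recursion delivers the non-strict chain
\[ a^{(1)}_{k+1} \leq a^{(p)}_k \leq a^{(q)}_k \leq a^{(m)}_{k+1}. \]
Moreover property (3) forces $1 \in t_1$ and $m \in t_m$, so by strict internality of $K$ the first inequality in the chain is strict exactly when $a^{(1)}_k < a^{(p)}_k$, and symmetrically the third is strict exactly when $a^{(q)}_k < a^{(m)}_k$. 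The middle inequality is strict exactly when $a^{(p)}_k < a^{(q)}_k$.

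The main obstacle, and the point where the separation hypothesis $\max t_1 \leq \min t_m$ is really needed, is ruling out a simultaneous collapse of all three inequalities. This turns out to be almost automatic: if all three were equalities, then $a^{(1)}_k = a^{(p)}_k = a^{(q)}_k = a^{(m)}_k$, directly contradicting the inductive hypothesis $a^{(1)}_k < a^{(m)}_k$. Hence at least one inequality in the chain must be strict, which propagates to $a^{(1)}_{k+1} < a^{(m)}_{k+1}$ and closes the induction. Combined with the shift-invariance reduction from the first paragraph, this establishes the proposition.
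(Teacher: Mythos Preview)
Your argument is correct and is genuinely different from the paper's. The paper argues by contradiction: it supposes that $(a^{(1)}_k)$ eventually equals the limit $p$, then pushes this equality upward through the indices (using property (3) and strict internality of $K$) to force the whole vector $(a^{(1)}_k,\dots,a^{(m)}_k)$ to collapse to $p$ from some step on; finally it looks at the last step before collapse and uses $\max t_1\le\min t_m$ to produce a contradictory ordering. Your route is more modular: you reduce the statement to the purely combinatorial fact $a^{(1)}_k<a^{(m)}_k$ for all $k$, prove that by an easy induction using the chain $a^{(1)}_{k+1}\le a^{(p)}_k\le a^{(q)}_k\le a^{(m)}_{k+1}$, and then invoke the already established strict internality of $K^{(T)}$ (Theorem~\ref{tktc}) together with the shift invariance $K^{(T)}(a^{(1)}_k,\dots,a^{(m)}_k)=L$. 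This is cleaner and reuses existing machinery rather than unpacking the limit again; the paper's version, by contrast, is self-contained and does not appeal to properties of $K^{(T)}$ as a mean. Note that the shift-invariance fact you use is stated only later in the paper (Proposition~\ref{pmpkt}(3)), but its one-line justification is independent of the present proposition, so there is no circularity.
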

\begin{proof}We show it for $(a^{(1)}_k)$, the other is similar. 

Suppose indirectly that $a^{(1)}_k=p=K^{(T)}(a^{(1)},\dots,a^{(m)})$ if $k\geq N$. We will show by induction on $i$ that $a^{(i)}_{k}=p$ if $k\geq N,\ 1\leq i\leq m$.

Assume that $a^{(j)}_{k}=p$ if $j\leq i<m,\ k\geq N$. Let $k\geq N$ be fixed. Then $a^{(i)}_{k+1}=p=K(a^{(j_{i,1})}_{k},\dots,a^{(j_{i,n})}_{k})$. By property (3) there are $h,l$ such that $j_{i,h}\leq i<j_{i,l}$. By induction $a^{j_{i,h}}_k=p$, therefore by strict internality of $K$, all other terms have to be equal to $p$ as well, e.g. $a^{j_{i,l}}_k=p$. But we know that $a^{(j_{i,1})}_{k}\leq\dots\leq a^{(j_{i,n})}_{k}$ i.e. $p=a^{(j_{i,h})}_{k}\leq\ a^{(i+1)}_{k}\leq\ a^{(j_{i,l})}_{k}=p$ which gives that $a^{(i+1)}_{k}=p$ too.

Now let $k$ be chosen such that $a^{(1)}_{k+1}=\dots=a^{(m)}_{k+1}=p$ and $a^{(1)}_{k}\ne p,a^{(m)}_{k}\ne p$. This means that $a^{(1)}_{k}<p<a^{(m)}_{k}$. Now $a^{(1)}_{k+1}=K(a^{(j_{1,1})}_{k},\dots,a^{(j_{1,n})}_{k}),\ a^{(m)}_{k+1}=K(a^{(j_{m,1})}_{k},\dots,a^{(j_{m,n})}_{k})$. By property (3) $j_{1,1}=1,\ j_{m,n}=m$ which by strict internality of $K$ yields that $a^{(j_{1,n})}_{k}>p,\ a^{(j_{m,1})}_{k}<p$ have to hold. But by assumption $\max t_1=j_{1,n}\leq j_{m,1}=\min t_m$ which gives that $a^{(j_{1,1})}_{k}\leq\dots\leq a^{(j_{1,n})}_{k}\leq a^{(j_{m,1})}_{k}\leq\dots\leq a^{(j_{m,n})}_{k}$ which is a contradiction.
\end{proof}

\begin{prp}\label{pmpkt}Let $K$ be an $n$-variable mean and let $T$ be admissible for $(n,m)$, $a^{(1)}\leq\dots\leq a^{(m)}$ and $a<b$. Then $K^{(T)}$ has the following properties:
\begin{enumerate}

\item[(1)] $K^{(T)}(a,\dots,a)=a$

\item[(2a)] $K(a^{(1)},\dots,a^{(n)})\leq K^{(T)}(a^{(1)},\dots,a^{(m)})\leq K(a^{(m-n+1)},\dots,a^{(m)})$

\item[(2b)] If $n=2,m=3$ then $a\leq K(a,K(a,b))\leq K^{(T)}(a,a,b)\leq K(a,b)\leq K^{(T)}(a,b,b)\leq K(K(a,b),b) \leq b$. If $K$ is strictly internal then "$\leq$" can be replaced by "$<$".

\item[(3)] $\forall k\ K^{(T)}(a^{(1)}_k,\dots,a^{(m)}_k)=K^{(T)}(a^{(1)},\dots,a^{(m)})$

\item[(4)] If $a<b$ then $\exists x\in (a,b)$ such that $K^{(T)}(a,x,\dots,x,b)=x$.

\item[(5)] If $x\in(a,b),\ K^{(T)}(a,x,\dots,x,b)=x$ then \newline$K^{(T)}(a,\dots,a,b)\leq x\leq K^{(T)}(a,b,\dots,b)$.

\end{enumerate}
\end{prp}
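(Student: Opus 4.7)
The plan is to handle the six items by combining strict internality, monotonicity, and continuity of both $K$ and $K^{(T)}$ (Theorem \ref{tktc}) with the monotonicity facts extracted from the proofs of Theorems \ref{t4} and \ref{tktc}: the sequence $(a^{(1)}_k)$ is nondecreasing, $(a^{(m)}_k)$ is nonincreasing, and both converge to the common limit $c:=K^{(T)}(a^{(1)},\dots,a^{(m)})$.

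Items (1), (3), (4), (5) are short. For (1), starting from $(a,\dots,a)$ every sequence stays at $a$, so the common limit is $a$. For (3), the sequences launched from $(a^{(1)}_k,\dots,a^{(m)}_k)$ at step $0$ are precisely the shifted sequences $(a^{(i)}_{k+j})_{j\geq 0}$ of the original iteration, so they share the limit $c$. For (5), since $a\leq x\leq b$, monotonicity of $K^{(T)}$ gives $K^{(T)}(a,\dots,a,b)\leq K^{(T)}(a,x,\dots,x,b)=x\leq K^{(T)}(a,b,\dots,b)$. For (4), set $f(x):=K^{(T)}(a,x,\dots,x,b)$ on $[a,b]$; continuity of $K^{(T)}$ makes $f$ continuous, strict internality gives $f(a)>a$ and $f(b)<b$, and the intermediate value theorem applied to $f(x)-x$ yields a fixed point in $(a,b)$.

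For (2a), I would inspect the first-step values $a^{(1)}_1$ and $a^{(m)}_1$. A double-counting argument using property (2), $\sum_i|t_i|=\sum_k|\{i:k\in t_i\}|=mn$ combined with the trivial bound $|t_i|\leq n$, forces $|t_i|=n$, i.e.\ each $t_i$ has pairwise distinct coordinates. By property (3) and sortedness, $j_{m,n}=m$, and distinctness of the coordinates of $t_m$ then forces $j_{m,l}\leq m-n+l$ for every $l$. Monotonicity of $K$ gives $a^{(m)}_1=K(a^{(j_{m,1})},\dots,a^{(j_{m,n})})\leq K(a^{(m-n+1)},\dots,a^{(m)})$, and since $(a^{(m)}_k)$ is nonincreasing, $c\leq K(a^{(m-n+1)},\dots,a^{(m)})$. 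The lower bound is symmetric, via $j_{1,1}=1$ and $j_{1,l}\geq l$.

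For (2b), under the unique admissible $T$ for $(2,3)$, namely $t_1=(1,2),t_2=(1,3),t_3=(2,3)$, I would compute two iterates. From $(a,a,b)$ one obtains $(a,K(a,b),K(a,b))$ at step $1$ and then $a^{(1)}_2=K(a,K(a,b))$ at step $2$, while $a^{(3)}_1=K(a,b)$; monotonicity of the extreme sequences yields $K(a,K(a,b))\leq K^{(T)}(a,a,b)\leq K(a,b)$. From $(a,b,b)$ one has $a^{(1)}_1=K(a,b)$ and $a^{(3)}_2=K(K(a,b),b)$, whence $K(a,b)\leq K^{(T)}(a,b,b)\leq K(K(a,b),b)$. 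The outer bounds $a\leq K(a,K(a,b))$ and $K(K(a,b),b)\leq b$ follow from strict internality of $K$, and the strict versions come from $a<K(a,b)<b$ when $K$ is strictly internal. The main obstacle I anticipate is justifying distinctness of the coordinates in (2a), which the double-counting argument above resolves; everything else is mechanical.
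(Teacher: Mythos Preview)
Your approach is essentially the paper's: items (1), (3), (4), (5) match almost verbatim, (2a) is argued via the first-step values $a^{(1)}_1,a^{(m)}_1$ and monotonicity of the extreme sequences, and (2b) is obtained by reading off $a^{(1)}_2$ and $a^{(3)}_1$ for the tuple $(a,a,b)$ (and symmetrically for $(a,b,b)$). Your double-counting argument in (2a) showing that each $t_i$ has pairwise distinct coordinates is a nice addition that the paper leaves implicit.

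The one soft spot is the strict part of (2b). Saying ``the strict versions come from $a<K(a,b)<b$'' justifies the outermost strict inequalities $a<K(a,K(a,b))$ and $K(K(a,b),b)<b$, but it does not by itself give, e.g., $K(a,K(a,b))<K^{(T)}(a,a,b)$ or $K^{(T)}(a,a,b)<K(a,b)$: these amount to $a^{(1)}_2<c$ and $c<a^{(3)}_1$, i.e.\ that the extreme sequences never actually reach the common limit. The paper handles this by invoking Proposition~\ref{tfsnqc} (applicable since $\max t_1=2=\min t_3$). An equally short fix in your framework: after one step the tuple $(a,a,b)$ becomes $(a,K(a,b),K(a,b))$, so by your item (3) $K^{(T)}(a,a,b)=K^{(T)}(a,K(a,b),K(a,b))$, and strict internality of $K^{(T)}$ (Theorem~\ref{tktc}) then gives $K^{(T)}(a,a,b)<K(a,b)$; applying the same idea after two steps yields $K(a,K(a,b))<K^{(T)}(a,a,b)$. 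Either route closes the gap.
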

\begin{proof} \begin{enumerate}

\item[(1)] $\forall k\ a^{(i)}_k=a$.

\item[(2a)] Obvious from the first element of the associated sequences: \newline $K(a^{(1)},\dots,a^{(n)})\leq a^{(1)}_1\leq K^{(T)}(a^{(1)},\dots,a^{(m)})\leq a^{(m)}_1\leq K(a^{(m-n+1)},\dots,a^{(m)})$.

\item[(2b)] The first inequality is obvious. For the second let us take the associated sequences for the 3-tuples $a,a,b$. We get $a^{(1)}_2=K(a,K(a,b))$ and $(a^{(1)}_n)$ being increasing gives the second inequality. For the third consider $a^{(3)}_1=K(a,b)$ and $a^{(3)}_n$ is decreasing. The rest are similar.

Showing the ''$<$'' part, it is enough to refer to \ref{tfsnqc} because $\max t_1=2\leq\min t_3=2$.

\item[(3)] If we examine the associated sequences for $a^{(1)}_k,\dots,a^{(m)}_k$ as starting points and for $a^{(1)},\dots,a^{(m)}$ then we can see that they are the same, more precisely the indexes in the first are shifted by $k$ to the indexes of the second. 

\item[(4)] Let us define $f(x)=K^{(T)}(a,x,\dots,x,b)-x$. By Theorem \ref{tktc} $f$ is continuous. Clearly $f(a)\geq 0, f(b)\leq 0$ which implies the existence of $x$ such that $f(x)=0$.

\item[(5)] By monotonity $K^{(T)}(a,\dots,a,b)\leq K^{(T)}(a,x,\dots,x,b)\leq K^{(T)}(a,b,\dots,b)$.

\end{enumerate}
\end{proof}

\begin{ex}It can happen that $\forall k\ a^{(1)}_k= K^{(T)}(a^{(1)},\dots,a^{(m)})= a^{(m)}_k$. I.e. in \ref{pmpkt} (2a) ''$\leq$'' cannot be replaced by ''$<$''.
\end{ex}
\begin{proof}Let $n=5,m=6$. Let $t_i=\{1,2,3,4,5,6\}-\{7-i\}\ (1\leq i\leq 6)$ and $T=\{t_i:1\leq i\leq 6\}$. Clearly $T$ is admissible (the only such for (5,6)).

Let \[K(a^{(1)},\dots,a^{(5)})=\frac{\min\{a^{(1)},\dots,a^{(5)}\}+\max\{a^{(1)},\dots,a^{(5)}\}}{2}.\] 
Evidently $K$ is strictly internal, monotone and continuous.

Let $a^{(1)}=1,a^{(2)}=1,a^{(3)}=2,a^{(4)}=3,a^{(5)}=4,a^{(6)}=4$. Then $a^{(1)}_1=\dots=a^{(6)}_1=K^{(T)}(a^{(1)},\dots,a^{(6)})=\frac{1+4}{2}$.
\end{proof}

We state a theorem regarding equivalent means. We recall the classic definition.
\begin{df}Two means $K$ and $L$ are equivalent if there is a homeomorphism $f$ of $\mathbb{R}$ (or between the domains of $K,L$) such that $L=K_f$ where $K_f(a_1,\dots,a_n)=f^{-1}\big(K(f(a_1),\dots,f(a_n))\big)$.
\end{df}

\begin{thm}\label{tee}Let $T$ be an admissible system for $n<m$. Let two n-variable means $K,L$ are equivalent by function $f$. Then $K^{(T)},L^{(T)}$ are equivalent means as well and the same function $f$ testifies that. 
\end{thm}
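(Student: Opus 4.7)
The plan is to conjugate the whole iteration defining $K^{(T)}$ by the homeomorphism $f$ and then pass to the limit. Rewriting $L = K_f$ as the functional identity $f\circ L = K\circ(f,\dots,f)$, I want to lift this from a single application of the $n$-variable mean to the whole recursive process producing $K^{(T)}$ and $L^{(T)}$. Assume first that $f$ is strictly increasing, so the sorted input $a^{(1)}\leq\dots\leq a^{(m)}$ is sent to the sorted tuple $f(a^{(1)})\leq\dots\leq f(a^{(m)})$, and $K^{(T)}(f(a^{(1)}),\dots,f(a^{(m)}))$ can be unfolded directly from the definition.

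The core of the argument is a short induction on $k$. Let $(a^{(i)}_k)$ be the sequences generated by $L$ from $a^{(1)},\dots,a^{(m)}$ and $(b^{(i)}_k)$ those generated by $K$ from $f(a^{(1)}),\dots,f(a^{(m)})$, both relative to the same admissible $T=\{t_1,\dots,t_m\}$ with $t_i=(j_{i,1},\dots,j_{i,n})$. I would show $b^{(i)}_k = f(a^{(i)}_k)$ for every $i$ and $k$ by induction on $k$. The base case $k=0$ is immediate, and the inductive step reads
\[b^{(i)}_{k+1} = K(b^{(j_{i,1})}_k,\dots,b^{(j_{i,n})}_k) = K(f(a^{(j_{i,1})}_k),\dots,f(a^{(j_{i,n})}_k)) = f(L(a^{(j_{i,1})}_k,\dots,a^{(j_{i,n})}_k)) = f(a^{(i)}_{k+1}),\]
using the inductive hypothesis for the second equality and the conjugation identity $K\circ(f,\dots,f) = f\circ L$ for the third.

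Theorem \ref{t4} then guarantees that both families converge, and writing $\alpha = L^{(T)}(a^{(1)},\dots,a^{(m)})$ and $\beta = K^{(T)}(f(a^{(1)}),\dots,f(a^{(m)}))$, the continuity of $f$ yields $\beta = \lim_k b^{(i)}_k = \lim_k f(a^{(i)}_k) = f(\alpha)$. This gives $L^{(T)}(a^{(1)},\dots,a^{(m)}) = f^{-1}(\beta) = (K^{(T)})_f(a^{(1)},\dots,a^{(m)})$, which is the claimed equivalence. The only noteworthy subtlety, and the main place where additional bookkeeping would be required, is the case when $f$ is strictly decreasing: then $f(a^{(1)}),\dots,f(a^{(m)})$ is in reverse sorted order, so one has to re-sort before feeding into $K^{(T)}$ and track an order-reversing permutation through the induction, ultimately invoking the symmetry of $K^{(T)}$ in its arguments. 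Apart from this bookkeeping the argument is entirely driven by the one-step conjugation $K\circ(f,\dots,f) = f\circ L$.
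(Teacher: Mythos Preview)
Your proof is correct and follows essentially the same route as the paper: both set $b^{(i)}_k=f(a^{(i)}_k)$, observe (you by explicit induction, the paper by inspection) that these are precisely the $K$-iterates starting from $f(a^{(1)}),\dots,f(a^{(m)})$, and then pass to the limit using continuity of $f$. Your remark on the order-reversing case when $f$ is strictly decreasing is an extra bit of care that the paper's own proof does not address.
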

\begin{proof} Let $a^{(1)},\dots,a^{(m)}\in\mathbb{R}$ be given. Let us create the associated sequences to $L$.

Let $a^{(1)}_0=a^{(1)},\dots,a^{(m)}_0=a^{(m)}$.

Set $a^{(i)}_{k+1}=f^{-1}\big(K(f(a^{(j_{i,1})}_k),\dots,f(a^{(j_{i,n})}_k))\big)\ (1\leq i\leq m,k\in\mathbb{N})$ where $j_{i,1},\dots,j_{i,n}\in I_m$ depend on $i$ only.

Let us investigate these sequences: $(b^{(i)})$ where 

$b^{(1)}_0=f(a^{(1)}),\dots,b^{(m)}_0=f(a^{(m)}),$

$b^{(i)}_k=f(a^{(i)}_k)\ (1\leq i\leq m)$.

If we run the same process for $K$ and $b^{(1)}_0,\dots,b^{(m)}_0$ then we end up with $K^{(T)}(b^{(1)}_0,\dots,b^{(m)}_0)$ that equals to $K^{(T)}(f(a^{(1)}),\dots,f(a^{(m)}))$. I.e. $\lim\limits_{k\to\infty}f(a^{(i)}_k)=K^{(T)}(f(a^{(1)}),\dots,f(a^{(m)}))$ or $\lim\limits_{k\to\infty}a^{(i)}_k=f^{-1}\big(K^{(T)}(f(a^{(1)}),\dots,f(a^{(m)}))\big)$ but this limit gives $L^{(T)}(a^{(1)},\dots,a^{(m)})$ as well.
\end{proof}

We close this section with some small statements regarding the cases $n=2,m=3$ and $n=2,m=4$.

\begin{prp}Suppose $K$ is a 2-variable, round mean, $a\leq b,\ k=K(a,b)$. Then $K^{(T_{2,3})}(a,k,b)=K^{(T_{2,4})}(a,k,k,b)=k$.
\end{prp}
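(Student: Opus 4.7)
The plan is to unfold the defining recursion for each of $T_{2,3}$ and $T_{2,4}$ and use the round equation to force the middle coordinate(s) to stay constantly at $k$. Since $n=2$, the admissible system is uniquely determined in each case: $T_{2,3}$ is $t_1=(1,2),t_2=(1,3),t_3=(2,3)$ and $T_{2,4}$ is $t_1=(1,2),t_2=(1,3),t_3=(2,4),t_4=(3,4)$. Because the common limit exists by Theorem \ref{t4}, it will suffice to exhibit a single coordinate (or a subsequence of one) whose value is identically $k$.

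For $T_{2,3}$, starting from $(a,k,b)$, I would write $\alpha_n=a^{(1)}_n$, $\beta_n=a^{(3)}_n$ and prove by induction on $n$ the joint invariant $a^{(2)}_n=k$ and $K(\alpha_n,\beta_n)=k$. The base case $n=0$ is the hypothesis $K(a,b)=k$. In the inductive step, the invariant reads off $a^{(2)}_{n+1}=K(\alpha_n,\beta_n)=k$ immediately, while $\alpha_{n+1}=K(\alpha_n,k)$ and $\beta_{n+1}=K(k,\beta_n)$; now the round equation applied to the pair $(\alpha_n,\beta_n)$ with mean $k$ gives $K(\alpha_{n+1},\beta_{n+1})=k$ and closes the induction. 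Passing to the limit yields $K^{(T_{2,3})}(a,k,b)=k$.

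For $T_{2,4}$, starting from $(a,k,k,b)$, a direct two-step unfold produces first $(K(a,k),K(a,k),K(k,b),K(k,b))$ and then $(K(a,k),k,k,K(k,b))$, where the middle two entries collapse to $k$ by the round equation $K(K(a,k),K(k,b))=k$. So the shape $(a_r,k,k,b_r)$ with $K(a_r,b_r)=k$ reproduces itself every two steps via $a_{r+1}=K(a_r,k)$, $b_{r+1}=K(k,b_r)$, and one induction on $r$ establishes $a^{(2)}_{2r}=a^{(3)}_{2r}=k$ for every $r$. The existing limit of $a^{(2)}_n$ is therefore $k$, giving the second equality.

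The one point that needs care, and that I expect to be the only real obstacle, is that the round equation is stated for $a<b$ only, so I must check that the strict inequalities $\alpha_n<\beta_n$ and $a_r<b_r$ propagate through the recursion. Both are immediate from strict internality: if $a_r<k<b_r$ then $a_{r+1}=K(a_r,k)\in(a_r,k)$ and $b_{r+1}=K(k,b_r)\in(k,b_r)$, giving $a_{r+1}<k<b_{r+1}$. The degenerate case $a=b$ is trivial since then $a=k=b$ and every sequence is constantly $a$.
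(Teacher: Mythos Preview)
Your proof is correct and follows essentially the same approach as the paper: for $T_{2,3}$ you maintain the invariant $a^{(2)}_n=k$ together with $K(\alpha_n,\beta_n)=k$, and for $T_{2,4}$ you observe that two iteration steps send $(a,k,k,b)$ to $(K(a,k),k,k,K(k,b))$ and induct. The paper does precisely this, only more tersely. Your additional care in verifying that the strict inequalities $\alpha_n<\beta_n$ (resp.\ $a_r<b_r$) persist---so that the round equation, stated only for $a<b$, actually applies---and your explicit treatment of the degenerate case $a=b$ are improvements over the paper, which silently assumes these points.
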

\begin{proof}$(2,3)$: For the associated sequences $a^{(1)}_1=K(a,k),a^{(2)}_1=k,a^{(3)}_1=K(k,b)$. By roundness we get that $a^{(2)}_2=K(a^{(1)}_1,a^{(3)}_1)=k$. 
If we apply this for $a^{(1)}_1,k,a^{(3)}_1$ we get that $a^{(2)}_3=k$. By induction $\forall n\ a^{(2)}_n=k$ and $a^{(2)}_n\to K^{(T_{2,3})}(a,k,b)$. 

$(2,4)$: By the definition of the usual sequences ($a^{(1)}_0=a,a^{(2)}_0=a^{(3)}_0=k,a^{(4)}_0=b$) we get that $a^{(1)}_1=a^{(2)}_1=K(a,k),a^{(3)}_1=a^{(4)}_1=K(k,b)$ and $a^{(1)}_2=K(a,k),a^{(2)}_2=a^{(3)}_2=k,a^{(4)}_2=K(k,b)$ by roundness. From this point we can go by induction and get that $\forall n\ a^{(2)}_{2n}=k$ hence $a^{(2)}_n\to k$.
\end{proof}

\begin{prp}If $a\leq b,\ K^{(T_{2,3})}(a,x,b)=x$ implies $x=K(a,b)$ then $K$ is round.
\end{prp}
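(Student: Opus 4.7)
The plan is to show, given $a<b$ and $k=K(a,b)$, that $K(K(a,k),K(k,b))=k$, which is exactly the roundness functional equation. Everything will follow by chaining together parts (3) and (4) of Proposition \ref{pmpkt} with the hypothesis.

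First, I would apply Proposition \ref{pmpkt}(4) to the pair $a<b$: there exists $x\in(a,b)$ with $K^{(T_{2,3})}(a,x,b)=x$. The hypothesis forces $x=K(a,b)=k$. Thus we obtain the identity
\[K^{(T_{2,3})}(a,k,b)=k.\]

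Second, I would compute one step of the associated sequences starting from $(a^{(1)}_0,a^{(2)}_0,a^{(3)}_0)=(a,k,b)$. Using the unique admissible system $t_1=(1,2)$, $t_2=(1,3)$, $t_3=(2,3)$ we get
\[a^{(1)}_1=K(a,k),\quad a^{(2)}_1=K(a,b)=k,\quad a^{(3)}_1=K(k,b).\]
By the shift invariance of $K^{(T)}$ (Proposition \ref{pmpkt}(3)),
\[K^{(T_{2,3})}\bigl(K(a,k),\,k,\,K(k,b)\bigr)=K^{(T_{2,3})}(a,k,b)=k.\]

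Third, set $a'=K(a,k)$ and $b'=K(k,b)$. Because $K$ is strictly internal and $a<b$, we have $a<k<b$, hence $a'<k<b'$; in particular $a'\leq b'$, so the hypothesis applies to the pair $(a',b')$. Since $K^{(T_{2,3})}(a',k,b')=k$, the hypothesis forces $k=K(a',b')=K\bigl(K(a,k),K(k,b)\bigr)$, which is precisely the roundness equation for $K$ at the pair $(a,b)$.

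There is no real obstacle: the whole argument is essentially a one-line application of the hypothesis, once one recognizes that Proposition \ref{pmpkt}(3) transports the fixed-point relation $K^{(T_{2,3})}(a,k,b)=k$ to the shrunken triple $(a',k,b')$, thereby allowing the hypothesis to be invoked a second time to identify $k$ with $K(a',b')$. The only mild point to check is that the pair $(a',b')$ satisfies the assumption $a'\leq b'$ required to apply the hypothesis, which is immediate from monotonicity (or strict internality) of $K$.
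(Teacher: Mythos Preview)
Your argument is correct and follows essentially the same route as the paper: obtain $K^{(T_{2,3})}(a,k,b)=k$, transport this via Proposition~\ref{pmpkt}(3) to the shrunken triple $(K(a,k),k,K(k,b))$, and invoke the hypothesis once more on the pair $(a',b')=(K(a,k),K(k,b))$ to conclude $k=K(a',b')$. Your write-up is in fact cleaner than the paper's: you make explicit the appeal to Proposition~\ref{pmpkt}(4) for the existence step that yields $K^{(T_{2,3})}(a,k,b)=k$, whereas the paper states this equality without spelling out why it holds; and you avoid the paper's detour through the auxiliary quantity $a^{(2)}_2$ and the separate ``uniqueness'' step, going straight from $K^{(T_{2,3})}(a',k,b')=k$ to $k=K(a',b')$ via the hypothesis.
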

\begin{proof} Let $k=K(a,b)$. When we create the associated sequences for $a,k,b$ then $a^{(2)}_1=k$ holds. By Proposition \ref{pmpkt} (3) $K^{(T_{2,3})}(a^{(1)}_1,a^{(2)}_1,a^{(3)}_1)=K^{(T_{2,3})}(a,k,b)=k=a^{(2)}_1$. Because $a^{(2)}_2=K(a^{(1)}_1,a^{(3)}_1)$ we have $K^{(T_{2,3})}(a^{(1)}_1,a^{(2)}_2,a^{(3)}_1)=a^{(2)}_2$. By uniqueness $a^{(2)}_1=a^{(2)}_2$ that is $K$ being round.
\end{proof}

%\begin{prp}Suppose $K$ is a 2-variable, round mean. Let $k=K(a,b)$. Then $K^{(T_{2,4})}(a,k,k,b)=k$.
%\end{prp}
%\begin{proof} By the definition of the usual sequences ($a^{(1)}_0=a,a^{(2)}_0=a^{(3)}_0=k,a^{(4)}_0=b$) we get that $a^{(1)}_1=a^{(2)}_1=K(a,k),a^{(3)}_1=a^{(4)}_1=K(k,b)$ and $a^{(1)}_2=K(a,k),a^{(2)}_2=a^{(3)}_2=k,a^{(4)}_2=K(k,b)$ by roundness. From this point we can go by induction and get that $\forall n\ a^{(2)}_{2n}=k$ hence $a^{(2)}_n\to k$.
%\end{proof}

%We formulate some problems regarding defining systems that are of combinatorial favour.
%\begin{Prb}(a) We constructed an admissible system $T$. The natural question arises that is it the only such system or others can be built?
%(b) Do (1) and (2) imply (3) and (4)?
%\end{Prb}

%--------------------------------------------------------------------------------------------------------------------------------------------------------------shrinking------------
\section{Shrinking}

We descibe a generic way of reducing the number of variables of a mean that is similar the technique that we had in the previous section.

Let $K$ be a stricly internal, monotone, continuous $m$-variable mean. Let $n<m$ and $a^{(1)}\leq\dots\leq a^{(n)}\in\mathbb{R},\ (a^{(1)},\dots,a^{(n)})\in Dom\ K$ be given. We create sequences in the following way:

Let $a^{(1)}_0=a^{(1)},\dots,a^{(n)}_0=a^{(n)}$.

Set $a^{(i)}_{k+1}=K\big(a^{(1)}_k,\dots,a^{(i-1)}_k,a^{(i)}_k,\dots,a^{(i)}_k,a^{(i+1)}_k,\dots,a^{(n)}_k\big)\ (1\leq i\leq n)$ where in the middle there are $(m-n+1)$ pieces of $a^{(i)}_k$.

Therefore the associated defining system $T=T_{m,n}$ is the following: $T=\{t_1,\dots,t_n\}$ where $t_i\in I^m_n\ (1\leq i\leq n)$, $t_i=(1,\dots,i-1,i,\dots,i,i+1,\dots,n)$ and there are $(m-n+1)$ pieces of $i$ in $t_i$.

For these we can prove all previous statements:
\begin{prp}$T=T_{m,n}$ is admissible.
\end{prp}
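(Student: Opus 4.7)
The plan is to verify each of the four admissibility properties (1)--(4) directly from the explicit formula
\[t_i = (1, 2, \dots, i-1, \underbrace{i, \dots, i}_{m-n+1}, i+1, \dots, n), \qquad 1 \leq i \leq n,\]
that defines $T = T_{m,n}$. Each verification amounts to a one-line inspection once this form is written down.

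I would handle (1), (3), and (4) first because they are transparent. For (1), comparing $t_i$ with $t_{i+1}$ position by position shows that the two tuples agree on the first $i$ and the last $n-i$ entries, and strictly bump from $i$ up to $i+1$ on the $m-n$ intermediate positions, giving $t_i \leq t_{i+1}$ coordinate-wise. For (3), every $t_i$ contains $1$ and $n$, so $\min t_i = 1$ and $\max t_i = n$, which immediately yields the required inequalities, strict when $2 \leq i \leq n-1$. For (4), the choice $j = 1$ works uniformly: since $t_1$ lists $1$ with multiplicity $m-n+1$ followed by $2, 3, \dots, n$, every index $i \geq 2$ appears as a coordinate of $t_1$.

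The step requiring a little care is (2). In the shrinking setting the tuples have repeated entries, so the natural reading of (2) here is the doubly-stochastic condition on the associated transition matrix, namely that the total multiplicity of each $l \in I_n$ across $t_1, \dots, t_n$ equals $m$. This is exactly what is needed for the Markov-chain style argument behind Theorem \ref{th7} to carry over. The verification is a one-line count: $l$ contributes multiplicity $m-n+1$ in $t_l$ and multiplicity $1$ in each of the other $n-1$ tuples, for a total of $(m-n+1) + (n-1) = m$. I expect the main obstacle to be nothing more than this interpretive point --- reading (2) with multiplicities in the shrinking setting --- after which all four properties drop out by direct inspection of the formula for $t_i$.
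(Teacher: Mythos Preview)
Your proposal is correct and follows the same approach as the paper, which simply records ``All four properties obviously hold'' without further elaboration. Your explicit position-by-position check of (1), (3), (4) is accurate, and your remark that (2) must be read with multiplicities in the shrinking setting is exactly right --- indeed this multiplicity reading is what makes the associated $n\times n$ transition matrix doubly stochastic in the subsequent Markov-chain argument, so your interpretation is the intended one even though the paper does not pause to say so.
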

\begin{proof}All four properties obviously hold.
\end{proof}

\begin{cor}If $a^{(1)}\leq\dots\leq a^{(n)}$ is given then $a^{(1)}\leq a^{(1)}_k\leq\dots\leq a^{(n)}_k\leq a^{(n)}$  and all associated sequences $(a^{(i)}_k)\ (1\leq i\leq n)$ converges to the same limit.
\qed
\end{cor}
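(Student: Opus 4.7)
The plan is to reduce the corollary directly to Theorem \ref{t4} applied to the shrinking system $T_{m,n}$. The preceding proposition already establishes that $T_{m,n}$ is admissible, so essentially the work is just to observe that the statement and proof of Theorem \ref{t4} depend only on the four admissibility properties together with strict internality, monotonicity, and continuity of the driving mean---never on the relative sizes of the ``tuple length'' and ``number of sequences'' parameters. Here those roles are swapped: we have $n$ sequences driven by tuples of length $m$, whereas the extension setting had $m$ sequences driven by tuples of length $n$.

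With that relabeling in mind, I would first establish the sandwich bound. Property (3) gives $1\in t_1$ and $n\in t_n$, and property (1) together with the monotonicity of $K$ and an induction on $k$ yields $a^{(1)}_k\leq\cdots\leq a^{(n)}_k$ for every $k$. Because $1\in t_1$, $(a^{(1)}_k)$ is monotone increasing and bounded above by $a^{(n)}$, and because $n\in t_n$, $(a^{(n)}_k)$ is monotone decreasing and bounded below by $a^{(1)}$; both outer sequences therefore converge, and the chain $a^{(1)}\leq a^{(1)}_k\leq\cdots\leq a^{(n)}_k\leq a^{(n)}$ holds.

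Next I would transplant the convergence argument from Theorem \ref{t4} verbatim. Let $c=\lim_k a^{(1)}_k$ and, arguing by contradiction, let $i\geq 2$ be the least index for which $(a^{(i)}_k)$ does not converge to $c$. Property (4) supplies some $j<i$ with $i\in t_j$, and by the minimality of $i$ we have $(a^{(j)}_k)\to c$. Passing to a subsequence $(k_q)$ along which every $(a^{(p)}_{k_q})$ with $p\in t_j=(u_1,\dots,u_m)$ converges to a limit $w_p\geq c$, and arranging that $w_i>c$, the strict internality of $K$ forces $K(w_{u_1},\dots,w_{u_m})>c$ (not all the $w_{u_r}$ can be equal since $w_i>c\geq\min_r w_{u_r}$). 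Continuity of $K$ then gives $a^{(j)}_{k_q+1}=K(a^{(u_1)}_{k_q},\dots,a^{(u_m)}_{k_q})\to K(w_{u_1},\dots,w_{u_m})>c$, contradicting $a^{(j)}_k\to c$.

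The only real obstacle is a bookkeeping nuisance: the notation $I_m^n$ in Section~2 was introduced under the standing assumption $n<m$, and one must read it symmetrically so that the $I_n^m$ of the shrinking section simply denotes length-$m$ tuples with entries in $I_n$, and similarly the admissibility properties (1)--(4) are read with the roles of ``tuple length'' and ``number of sequences'' interchanged. Once this is settled the corollary is a direct citation of Theorem \ref{t4}, and in principle the one-line proof ``By the preceding proposition and Theorem~\ref{t4}'' suffices.
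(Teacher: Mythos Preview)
Your proposal is correct and matches the paper's approach exactly: the paper gives no proof beyond the \qed\ symbol, treating the corollary as an immediate consequence of the preceding admissibility proposition together with Theorem~\ref{t4}. Your additional care in spelling out the $n\leftrightarrow m$ relabeling and re-running the convergence argument is more explicit than what the paper does, but the underlying route is the same one-line citation you identify at the end.
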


\begin{df}Let us denote the common limit point by $K^{(T)}(a^{(1)},\dots,a^{(n)})$.
\end{df}

\begin{cor}$K^{(T)}$ is stricly internal, monotone, continuous n-variable mean.
\qed
\end{cor}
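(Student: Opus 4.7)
The plan is to reduce the statement directly to Theorem \ref{tktc}. The preceding proposition already establishes that the system $T = T_{m,n}$ is admissible, and $K$ is by hypothesis strictly internal, monotone, and continuous. Consequently, the iteration defining the sequences $(a^{(i)}_k)$ in the shrinking construction fits exactly the abstract framework set up in the extensions section: only the names of the two parameters (arity of $K$ versus number of sequences produced) are interchanged. Existence of a common limit is then Theorem \ref{t4}, and Theorem \ref{tktc} supplies strict internality, monotonicity, and continuity of the resulting $n$-variable function $K^{(T)}$.

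Concretely, I would first observe that the proof of Theorem \ref{tktc} uses nothing beyond admissibility properties (1)--(4) of $T$ together with strict internality, monotonicity, and continuity of the underlying mean. In particular the direction of the inequality between the arity of the mean and the number of sequences plays no role in that argument: the increasing/decreasing behaviour of $(a^{(1)}_k)$ and $(a^{(m)}_k)$ used there comes entirely from property (3) (which ensures $1\in t_1$ and the top index lies in the top tuple) and strict internality. The monotonicity argument propagates inequalities sequence-by-sequence, and the continuity argument uses only Proposition \ref{r2b} together with the monotone/bracketing behaviour of the extremal sequences; both are structurally insensitive to the swap.

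Once this observation is in place, all three conclusions transfer verbatim, and the corollary requires no further computation. The only point I expect to need care is bookkeeping: making sure the reader sees that ``$m$'' in Theorem \ref{tktc} plays the role of ``$n$'' here (the number of sequences), while ``$n$'' in that theorem plays the role of ``$m$'' here (the arity of $K$). This is the main, and essentially only, obstacle, and it is a matter of notational transport rather than mathematical content. Acknowledging this once is enough to conclude.
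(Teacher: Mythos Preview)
Your proposal is correct and matches the paper's intent: the corollary is stated with a bare \qed\ precisely because the preceding proposition secures admissibility of $T_{m,n}$, after which Theorems~\ref{t4} and~\ref{tktc} apply verbatim. Your remark that the inequality between the arity of $K$ and the number of sequences is never used in those proofs is exactly the bookkeeping point needed, and nothing more is required.
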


\begin{thm}For a quasi-arithmetic m-mean $K$, $K^{(T_{m,n})}$ is the associated n-variable quasi-arithmetic n-mean $(n<m)$.
\end{thm}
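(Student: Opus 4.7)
The plan is to mirror the proof of Theorem~\ref{th7} essentially verbatim, since the shrinking construction is structurally the same recursion, only the system $T_{m,n}$ and the matrix dimensions change. Writing $K(b^{(1)},\dots,b^{(m)})=f^{-1}\bigl(\tfrac{1}{m}\sum f(b^{(j)})\bigr)$, I would first show by induction on $k$ that there exist non-negative coefficients $s^{i,k}_l$ with $\sum_l s^{i,k}_l=1$ such that
\[a^{(i)}_k=f^{-1}\Bigl(s^{i,k}_1 f(a^{(1)})+\dots+s^{i,k}_n f(a^{(n)})\Bigr),\]
the induction step being identical to (\ref{eq1}) in Theorem~\ref{th7} except that each $a^{(i)}_{k+1}$ is a weighted average of $m$ (not $n$) of the previous terms, with the term $a^{(i)}_k$ counted $m-n+1$ times.

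The key step is to identify the transition matrix. Taking $k=1$ gives the $n\times n$ stochastic matrix
\[M_{i,l}=\begin{cases}\dfrac{m-n+1}{m}&\text{if }l=i,\\[4pt]\dfrac{1}{m}&\text{if }l\neq i,\end{cases}\]
and exactly as in Theorem~\ref{th7} an induction yields $s^{i,k}_l=(M^k)_{i,l}$. I would then set up the stationary Markov chain with transition matrix $M$ and verify the four hypotheses of the limit theorem used before: $M$ is doubly stochastic because every row and every column sums to $(n-1)\cdot\tfrac{1}{m}+\tfrac{m-n+1}{m}=1$; it is irreducible because every off-diagonal entry is strictly positive; it is aperiodic because $M_{i,i}=\tfrac{m-n+1}{m}>0$ gives period $1$ at every state; and positive recurrence is automatic for an irreducible finite-state chain. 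The cited theorem then yields $(M^k)_{i,l}\to \tfrac{1}{n}$, so by continuity of $f^{-1}$,
\[a^{(i)}_k\longrightarrow f^{-1}\Bigl(\tfrac{1}{n}\bigl(f(a^{(1)})+\dots+f(a^{(n)})\bigr)\Bigr),\]
which is exactly the $n$-variable quasi-arithmetic mean associated with $f$.

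There is essentially no obstacle here beyond bookkeeping; the only place to be careful is the induction expressing $a^{(i)}_k$ as a weighted average, where one must correctly account for the $m-n+1$ repeated occurrences of $a^{(i)}_k$ in the definition of $a^{(i)}_{k+1}$, so that the resulting weights feed correctly into the product $M\cdot M^k$. Once that is set up, the verification of the Markov-chain hypotheses is immediate from the explicit form of $M$ and the argument concludes exactly as in Theorem~\ref{th7}.
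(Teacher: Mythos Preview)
Your proposal is correct and follows essentially the same approach as the paper's own proof: both express $a^{(i)}_k$ via coefficients $s^{i,k}_l=(M^k)_{i,l}$ for the explicit $n\times n$ matrix $M$ you wrote down, and then invoke the Markov-chain limit theorem after checking double stochasticity, irreducibility, aperiodicity, and positive recurrence. The only cosmetic difference is that the paper appeals to properties (2), (3), (4) of $T_{m,n}$ to verify these hypotheses, whereas you read them off directly from the explicit entries of $M$; both arguments are equivalent here.
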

\begin{proof} We follow the proof of Theorem \ref{th7} i.e. we use the theory of Markov chains.

If $K$ is quasi-arithmetic than there is a strictly monotone, continuous function $f$ such that 
\[K(b^{(1)},\dots,b^{(m)})=f^{-1}\Big(\frac{f(b^{(1)})+\dots+f(b^{(m)})}{m}\Big).\]

Let $a^{(1)},\dots,a^{(n)}$ be given.

In exactly the same way as in \ref{th7} one can show that there exist coefficients $s^{i,k}_l\ (1\leq i,l\leq n,\ k\in\mathbb{N})$ such that 
\[a^{(i)}_{k}=f^{-1}\big(s^{i,k}_1 f(a^{(1)})+\dots+s^{i,k}_n f(a^{(n)})\big)\]
and $0\leq s^{i,k}_l\leq 1$, $\forall i\forall k\ s^{i,k}_1+\dots+s^{i,k}_m=1$ holds. Clearly 
\[s^{i,1}_l=
\begin{cases}
\frac{1}{m} & \text{if } i\ne j\\
\frac{m-n+1}{m} & \text{if } i=j
\end{cases}\]

In this case the associated stohastic $n\times n$ matrix M is 

\[M_{i,j}=\begin{cases}
\frac{1}{m} & \text{if } i\ne j\\
\frac{m-n+1}{m} & \text{if } i=j
\end{cases}\]

Similarly to Theorem \ref{th7} it can be shown that $s^{i,k}_j=(M^k)_{i,j}$. 

For $M$ it can be proved that it is irreducible, aperiodic, positive recurrent and doubly stohastic because for showing that we just need properties (2),(3) and (4) of $T$ (see Theorem \ref{th7}).
Therefore it provides a uniform limit distribution i.e. $\lim\limits_{k\to\infty}s^{i,k}_j=\frac{1}{n}$. Using the continuity of $f^{-1}$ we get the statement.
\end{proof}

We can formulate a similar statement to Theorem \ref{tm2}. We omit the proof as it is similar.
\begin{thm}\label{tm3}If $n<m,\ K_1,K_2$ are m-variable quasi-arithmetic means and $K_1\leq K_2$ holds in m variables then $K_1\leq K_2$ holds in  n variables as well.
\qed
\end{thm}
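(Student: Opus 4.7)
The plan is to mirror the two-step argument used for Theorem \ref{tm2}. First I would invoke the theorem just proved to identify, for $i=1,2$, the shrunken mean $K_i^{(T_{m,n})}$ with the $n$-variable quasi-arithmetic mean associated with $K_i$. The statement then reduces to showing that the shrinking operation preserves pointwise inequalities: if $K_1 \leq K_2$ as $m$-variable means, then $K_1^{(T_{m,n})} \leq K_2^{(T_{m,n})}$ as $n$-variable means.

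Second, I would establish this preservation by the direct shrinking analogue of Proposition \ref{pnmine}. Fix initial data $a^{(1)} \leq \dots \leq a^{(n)}$ and generate two systems of sequences from the same starting values, $(a^{(i)}_k)$ built via $K_1$ and $(b^{(i)}_k)$ built via $K_2$. By induction on $k$ one shows $a^{(i)}_k \leq b^{(i)}_k$ for every $i$: the base case $k=0$ is immediate, and in the inductive step
\[ a^{(i)}_{k+1} \;=\; K_1\bigl(a^{(1)}_k,\dots,a^{(i)}_k,\dots,a^{(i)}_k,\dots,a^{(n)}_k\bigr) \;\leq\; K_2\bigl(a^{(1)}_k,\dots,a^{(i)}_k,\dots,a^{(i)}_k,\dots,a^{(n)}_k\bigr) \;\leq\; K_2\bigl(b^{(1)}_k,\dots,b^{(i)}_k,\dots,b^{(i)}_k,\dots,b^{(n)}_k\bigr) \;=\; b^{(i)}_{k+1}, \]
where the first inequality is the hypothesis $K_1 \leq K_2$ applied on the common tuple and the second uses monotonicity of $K_2$ together with the inductive hypothesis $a^{(j)}_k \leq b^{(j)}_k$. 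Passing to the limit $k\to\infty$ yields $K_1^{(T_{m,n})}(a^{(1)},\dots,a^{(n)}) \leq K_2^{(T_{m,n})}(a^{(1)},\dots,a^{(n)})$.

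There is no real obstacle here: both ingredients are already in place — the Markov-chain identification of the shrunken quasi-arithmetic mean from the preceding theorem and the one-line inductive monotonicity observation — so chaining them gives Theorem \ref{tm3} immediately.
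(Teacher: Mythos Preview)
Your proposal is correct and is exactly the argument the paper has in mind: the paper omits the proof, saying only that it is similar to Theorem~\ref{tm2}, and your two ingredients --- the Markov-chain identification of $K_i^{(T_{m,n})}$ with the $n$-variable quasi-arithmetic mean from the preceding theorem, together with the shrinking analogue of Proposition~\ref{pnmine} proved by the inductive comparison of the associated sequences --- are precisely that similar proof spelled out.
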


We just formulate the corresponding theorem on shrinking of equivalent means since the proof is the same (see Theorem \ref{tee}).

\begin{thm}Let two m-variable means $K,L$ are equivalent by function $f$. Then $K^{(T_{m,n})},L^{(T_{m,n})}$ are equivalent means as well and the same function $f$ testifies that ($n<m$). 
\qed
\end{thm}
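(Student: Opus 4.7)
The plan is to mirror the argument of Theorem \ref{tee} verbatim, using the specific admissible system $T_{m,n}$ in place of an arbitrary admissible $T$. Since $L=K_f$ means $L(x_1,\dots,x_m)=f^{-1}\big(K(f(x_1),\dots,f(x_m))\big)$, substituting $L$ into the shrinking recursion yields
\[a^{(i)}_{k+1}=f^{-1}\Big(K\big(f(a^{(1)}_k),\dots,f(a^{(i)}_k),\dots,f(a^{(i)}_k),\dots,f(a^{(n)}_k)\big)\Big),\]
where the middle block contains $(m-n+1)$ copies of $f(a^{(i)}_k)$, for $1\leq i\leq n$, $k\in\mathbb{N}$.

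The next step is the change of variables $b^{(i)}_k=f(a^{(i)}_k)$, with $b^{(i)}_0=f(a^{(i)})$. Applying $f$ to both sides of the above recursion turns it into
\[b^{(i)}_{k+1}=K\big(b^{(1)}_k,\dots,b^{(i)}_k,\dots,b^{(i)}_k,\dots,b^{(n)}_k\big),\]
again with $(m-n+1)$ copies of $b^{(i)}_k$ in the middle. This is precisely the shrinking recursion for $K$ starting from the $n$-tuple $f(a^{(1)}),\dots,f(a^{(n)})$.

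By definition of the shrinking operator applied to $K$, $\lim_{k\to\infty}b^{(i)}_k=K^{(T_{m,n})}\big(f(a^{(1)}),\dots,f(a^{(n)})\big)$. Applying the continuous $f^{-1}$, we obtain
\[\lim_{k\to\infty}a^{(i)}_k=f^{-1}\Big(K^{(T_{m,n})}\big(f(a^{(1)}),\dots,f(a^{(n)})\big)\Big).\]
The left-hand side is, by definition, $L^{(T_{m,n})}(a^{(1)},\dots,a^{(n)})$, so $L^{(T_{m,n})}=(K^{(T_{m,n})})_f$, which is the desired equivalence.

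I do not expect any genuine obstacle: the shrinking recursion is simply a particular instance of the admissible-system construction from Section~2, so the conjugation argument transfers without modification. The only care-point is bookkeeping — keeping track of the $(m-n+1)$ repeated middle entries under the change of variables $b^{(i)}_k=f(a^{(i)}_k)$ — but this is purely notational and introduces no new content.
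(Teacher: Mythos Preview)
Your proposal is correct and is exactly the approach the paper intends: the paper omits the proof entirely, stating only that it is the same as Theorem~\ref{tee}, and your argument is precisely that specialization of Theorem~\ref{tee} to the admissible system $T_{m,n}$. There is nothing to add.
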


%--------------------------------------------------------------------------------------------------------------------------------------------------------------Other ways of shrinking------------
\subsection{Other ways of shrinking}
For shrinking means there are many other ways as well, we provide two more.

\begin{df}If $K$ is a n-variable strictly internal and continuous mean, $a<b$ then let $K^{(s_1)}(a,b)=\inf\{x\in(a,b) : K(a,x,\dots,x,b)=x\}$.
\end{df}

The definition makes sense because \ref{pmpkt} gives that the above set is not empty. We remark that the infimum is a minimum because of continuity of $K$. Similar type of means (and shrinking) are extensively examined in \cite{kp}.

\begin{prp}The definition of $K^{(s_1)}$ provides a strictly internal, monotone and lower semi continuous mean.
\end{prp}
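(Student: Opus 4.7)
The plan is to package everything around the continuous auxiliary function $g_{a,b}(x) := K(a,x,\dots,x,b) - x$ defined on $[a,b]$ for each pair $a<b$, and to read off all three properties from the behavior of its smallest zero. Strict internality of $K$ immediately yields $g_{a,b}(a)>0$ and $g_{a,b}(b)<0$, so its zero set is nonempty, closed in $(a,b)$, and bounded away from both endpoints by continuity; hence $K^{(s_1)}(a,b)$ is attained and lies in $(a,b)$, which is strict internality. This step also delivers a crucial byproduct: because $K^{(s_1)}(a,b)$ is the \emph{smallest} zero and $g_{a,b}$ is positive at $a$, the intermediate value theorem forces $g_{a,b}(y)>0$, i.e.\ $K(a,y,\dots,y,b)>y$, for every $y\in[a,K^{(s_1)}(a,b))$.

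For monotonicity, assume $a_1\le a_2$ and $b_1\le b_2$ with $a_i<b_i$, and write $x_i=K^{(s_1)}(a_i,b_i)$. The case $x_1\le a_2$ is immediate since $x_2>a_2$ by strict internality, so suppose $x_1>a_2$. For every $y\in[a_2,x_1)$ the byproduct above gives $K(a_1,y,\dots,y,b_1)>y$, and monotonicity of $K$ upgrades this to $K(a_2,y,\dots,y,b_2)\ge K(a_1,y,\dots,y,b_1)>y$. So $g_{a_2,b_2}$ has no zero in $[a_2,x_1)$, forcing $x_2\ge x_1$.

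For lower semi-continuity, I would argue by contradiction: if $(a_k,b_k)\to(a,b)$ with $a_k<b_k$, $a<b$, and $\liminf_k K^{(s_1)}(a_k,b_k)<K^{(s_1)}(a,b)=:x$, extract a subsequence with $x_{k_j}\to x^*<x$ and pass to the limit in $K(a_{k_j},x_{k_j},\dots,x_{k_j},b_{k_j})=x_{k_j}$ using continuity of $K$ to obtain $K(a,x^*,\dots,x^*,b)=x^*$; strict internality of $K$ rules out $x^*\in\{a,b\}$, so $x^*\in(a,b)$ is a zero of $g_{a,b}$ strictly below the infimum $x$, a contradiction. The step I expect to be most delicate is monotonicity: knowing that $K^{(s_1)}(a,b)$ is \emph{some} fixed point of $x\mapsto K(a,x,\dots,x,b)$ is too weak to perturb in $a$ and $b$ directly, so the argument really leans on the identification of $K^{(s_1)}(a,b)$ as the \emph{smallest} such fixed point and on the resulting sign information for $g_{a,b}$ on the interval to its left.
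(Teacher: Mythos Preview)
Your proof is correct and follows essentially the same route as the paper's. Both arguments hinge on the auxiliary function $g_{a,b}(x)=K(a,x,\dots,x,b)-x$ (the paper calls it $f$): strict internality comes from $g_{a,b}(a)>0$, $g_{a,b}(b)<0$ and continuity; lower semi-continuity comes from passing to the limit in the fixed-point equation. For monotonicity the paper argues by contradiction (assuming $l'<l$, deducing $f(l')<0$, and then finding a zero in $(a,l')$), whereas you argue directly via the sign byproduct $g_{a_1,b_1}>0$ on $[a_1,x_1)$ pushed forward by monotonicity of $K$; these are the same idea in contrapositive form.
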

\begin{proof}Strict internality comes from the facts that the infimum is a minimum and $K(a,a,\dots,a,b)=a$ cannot hold.

For monotonicity let $a\leq a',b\leq b',\ K^{(s_1)}(a,b)=l,K^{(s_1)}(a',b')=l'$ i.e. $K(a,l,\dots,l,b)=l,K(a',l',\dots,l',b')=l'$. Suppose that $l'<l$. Then $K(a,l',\dots,l',b)=l'$ would contradict to $K^{(s_1)}(a,b)=l$. Hence $K(a,l',\dots,l',b)<l'$ has to hold because $K$ being monotone implies that $K(a,l',\dots,l',b)\leq K(a',l',\dots,l',b')$. Set $f(x)=K(a,x,\dots,x,b)-x$. Then $f(a)>0,f(l')<0$ therefore there is $x\in(a,l')$ such that $f(x)=0$ which is a contradiction.

 Let $K^{(s_1)}(a,b)=p,\ a_n\to a,\ b_n\to b,\ K^{(s_1)}(a_n,b_n)=l_n$. If $l_n\to l$ then $K(a_n,l_n,\dots,l_n,b_n)\to K(a,l,\dots,l,b)$ implies that $K(a,l,\dots,l,b)=l$ that gives $K^{(s_1)}(a,b)\leq l$ i.e. $K^{(s_1)}$ is lower semi continuous.
\end{proof}

%\begin{prp}If $K$ is a n-variable quasi-arithmetic mean then $K^{(s_1)}$ is the corresponding 2-variable quasi-arithmetic mean.
%\end{prp}
%\begin{proof}$K(a,x,\dots,x,b)=x$ implies that $f^{-1}\big(\frac{f(a)+(n-2)f(x)+f(b)}{n}\big)=x$ with a suitable function $f$. This gives that $\frac{f(a)+(n-2)f(x)+f(b)}{n}=f(x)$. Rearranging the equation we get that $x=f^{-1}\big(\frac{f(a)+f(b)}{2}\big)$.
%\end{proof}

We provide one more way of shrinking.

\begin{prp}(1) If $a<b$ are given, $K$ is a 2n-variable  strictly internal, monotone and continuous mean then $K^{(s_2)}(a,b)=K(a,\dots,a,b,\dots,b)$ is strictly internal, monotone, continuous where there are n pieces of a and n-pieces of b inside. 

(2) Similarly $K^{(s_3)}(a^{(1)},\dots,a^{(n)})=K(a^{(1)},\dots,a^{(n)},a^{(1)},\dots,a^{(n)})$ is strictly internal, monotone, continuous.
\qed
\end{prp}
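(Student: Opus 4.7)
The plan is to verify each of the three properties (strict internality, monotonicity, continuity) for both $K^{(s_2)}$ and $K^{(s_3)}$ by direct inspection, using the corresponding property of the underlying $2n$-variable mean $K$. In both cases the constructed mean is simply the composition of $K$ with a very simple linear map from low-dimensional to high-dimensional space (the ``duplication'' map), so no real analytic work is needed; the statements follow from properties of $K$ together with elementary set-theoretic observations about the multisets of arguments.

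For part (1), I would argue as follows. Given $a<b$, the $2n$-tuple $(a,\dots,a,b,\dots,b)$ has $\min=a$ and $\max=b$, and since $a\ne b$ the set $\{a,b\}$ has two distinct elements, so strict internality of $K$ gives $a<K(a,\dots,a,b,\dots,b)<b$, which is exactly strict internality of $K^{(s_2)}$. For monotonicity, if $a\leq a'$ and $b\leq b'$ then componentwise the tuple $(a,\dots,a,b,\dots,b)$ is dominated by $(a',\dots,a',b',\dots,b')$, so monotonicity of $K$ gives $K^{(s_2)}(a,b)\leq K^{(s_2)}(a',b')$. Continuity is immediate because the duplication map $(a,b)\mapsto(a,\dots,a,b,\dots,b)$ is continuous and $K$ is continuous, so their composition is continuous.

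For part (2), the same three verifications carry over verbatim after replacing the duplication map by the self-concatenation $(a^{(1)},\dots,a^{(n)})\mapsto(a^{(1)},\dots,a^{(n)},a^{(1)},\dots,a^{(n)})$. The one point worth noting is strict internality: if the $a^{(i)}$ are not all equal, then the $2n$-tuple obtained by concatenating the vector with itself also has at least two distinct entries and moreover the same minimum and maximum as the original vector, so strict internality of $K$ yields $\min_i a^{(i)}<K^{(s_3)}(a^{(1)},\dots,a^{(n)})<\max_i a^{(i)}$. Monotonicity and continuity again follow trivially from those of $K$ and of the concatenation map.

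There is no real obstacle here; the only subtlety worth being explicit about is the check that the minimum and maximum of the enlarged tuple coincide with those of the original arguments, which is what guarantees that the internality bounds produced by $K$ are the correct ones for $K^{(s_2)}$ and $K^{(s_3)}$ as means in fewer variables.
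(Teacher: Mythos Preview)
Your proposal is correct. The paper gives no proof at all for this proposition (it simply ends with \qed), treating the claims as immediate from the definitions; your write-up spells out exactly the routine verifications the paper leaves implicit, so the approaches coincide.
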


\begin{prp}Suppose $K$ is a 2-variable, round mean. If we construct $K^{(T_{2,4})}$ then $K^{(T_{2,4})}(a,a,b,b)=K(a,b)$.
\end{prp}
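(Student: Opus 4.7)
The plan is to reduce this statement directly to the immediately preceding proposition via a single iteration step combined with the shift-invariance property from Proposition \ref{pmpkt} (3).

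First I would compute the one-step iterates explicitly. The unique admissible system $T_{2,4}$ is $t_1=(1,2),\ t_2=(1,3),\ t_3=(2,4),\ t_4=(3,4)$. Starting from $a^{(1)}_0=a,\ a^{(2)}_0=a,\ a^{(3)}_0=b,\ a^{(4)}_0=b$, and using only that $K$ is a mean (so $K(a,a)=a$ and $K(b,b)=b$), one obtains $a^{(1)}_1 = K(a,a) = a$, $a^{(2)}_1 = K(a,b) = k$, $a^{(3)}_1 = K(a,b) = k$, $a^{(4)}_1 = K(b,b) = b$, where $k = K(a,b)$.

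Next I would invoke Proposition \ref{pmpkt} (3), which asserts that $K^{(T)}(a^{(1)}_n,\dots,a^{(m)}_n) = K^{(T)}(a^{(1)},\dots,a^{(m)})$ for every $n$. Taking $n=1$ in our situation yields
\[K^{(T_{2,4})}(a,a,b,b) \;=\; K^{(T_{2,4})}(a,k,k,b).\]
Now I can apply the immediately preceding proposition, which established precisely that $K^{(T_{2,4})}(a,k,k,b) = k$ whenever $K$ is round. Chaining these two equalities gives $K^{(T_{2,4})}(a,a,b,b) = k = K(a,b)$, which is the desired conclusion.

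There is no real obstacle here, since all the work has been done in the previous two propositions; the only point requiring care is verifying the first-step computation and confirming that $T_{2,4}$ takes the form above so that the initial iterates really do reduce to the $(a,k,k,b)$ pattern handled by the preceding result. As a sanity check one could alternatively run the iteration directly: the second and third steps produce $(K(a,k),\ k,\ k,\ K(k,b))$ by roundness (applied to the pair $(a\circ k,\ k\circ b)$), reproducing the pattern with $a$ and $b$ replaced by their means with $k$, and an induction shows $a^{(2)}_{2n+1} = a^{(3)}_{2n+1} = k$ for all $n$, giving the same limit. But the three-line reduction above is much cleaner.
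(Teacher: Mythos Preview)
Your argument is correct and in fact cleaner than the paper's. One small cross-referencing issue: the result $K^{(T_{2,4})}(a,k,k,b)=k$ that you invoke is not the proposition immediately preceding this one in the paper's layout (we are now in Section~3), but rather Proposition~\ref{pmpkt}'s neighbour at the end of Section~2 (the one treating the tuples $(a,k,b)$ and $(a,k,k,b)$ for round means). That aside, your reduction works exactly as stated.

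The paper takes a different route: it re-runs the direct iteration from scratch. Starting from $(a,a,b,b)$ it computes step~1 to get $(a,k,k,b)$, step~2 to get $(K(a,k),K(a,k),K(k,b),K(k,b))$, and then uses roundness to see that step~3 returns the middle two coordinates to $k$; an induction then gives $a^{(3)}_{2n+1}=k$ for all $n$. This is precisely the ``sanity check'' computation you sketch at the end of your proposal. What you do differently is to recognise that after one step the configuration is exactly the $(a,k,k,b)$ already handled earlier, so that shift-invariance (Proposition~\ref{pmpkt}(3)) plus that earlier result finishes the job in a single line. Your approach avoids repeating an induction that the paper has in effect already carried out, at the cost of relying on two prior propositions; the paper's approach is self-contained but duplicates work.
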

\begin{proof} Let $k=K(a,b)$. By the definition of the usual sequences ($a^{(1)}_0=a^{(2)}_0=a,a^{(3)}_0=a^{(4)}_0=b$) we get that $a^{(1)}_1=a,a^{(2)}_1=a^{(3)}_1=k,a^{(4)}_1=b$ and $a^{(1)}_2=a^{(2)}_2=K(a,k),a^{(3)}_2=a^{(4)}_2=K(k,b)$. By roundness we have $a^{(2)}_3=a^{(3)}_3=k$. From this point we can go on by induction and get that $\forall n\ a^{(3)}_{2n+1}=k$ hence $a^{(3)}_n\to k$.
\end{proof}

\begin{prp}If $K$ is a 2n-variable quasi-arithmetic mean then $K^{(s_2)}$ is the corresponding 2-variable quasi-arithmetic mean and similarly $K^{(s_3)}$ is the corresponding n-variable quasi-arithmetic mean.
\end{prp}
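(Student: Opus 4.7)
The plan is to reduce both claims to a one-line substitution into the defining formula of a quasi-arithmetic mean. By hypothesis there is a strictly monotone continuous $f$ with
\[
K(b^{(1)},\dots,b^{(2n)}) = f^{-1}\!\left(\frac{f(b^{(1)}) + \cdots + f(b^{(2n)})}{2n}\right).
\]
Unlike in the earlier theorems where the shrunk mean is defined as a limit of a sequence and one needs Markov chain convergence to identify the limit, here $K^{(s_2)}$ and $K^{(s_3)}$ are given directly as a single evaluation of $K$ on a duplicated list of inputs, so no limiting argument is needed.

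For $K^{(s_2)}$ I would substitute $n$ copies of $a$ followed by $n$ copies of $b$ into the formula above. The numerator collapses to $n\,f(a) + n\,f(b)$, the $n$'s cancel against the denominator $2n$, and we are left with $f^{-1}\!\big((f(a)+f(b))/2\big)$, which is by definition the 2-variable quasi-arithmetic mean generated by $f$.

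For $K^{(s_3)}$ the argument is identical in spirit: substituting the concatenation $(a^{(1)},\dots,a^{(n)},a^{(1)},\dots,a^{(n)})$ produces numerator $2\,(f(a^{(1)}) + \cdots + f(a^{(n)}))$ against denominator $2n$, so the factor $2$ cancels and we obtain $f^{-1}\!\big((f(a^{(1)}) + \cdots + f(a^{(n)}))/n\big)$, the $n$-variable quasi-arithmetic mean generated by $f$.

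There is no genuine obstacle in this proof; the only thing worth pointing out is \emph{why} it is so much shorter than Theorem \ref{th7} and its shrinking analogue. Those results had to handle an iteration, and the uniform weight $1/m$ (respectively $1/n$) emerged only in the limit as the stationary distribution of a doubly stochastic transition matrix. Here, by contrast, the weights are built in from the start by the symmetric duplication, so quasi-arithmeticity with generator $f$ is preserved by a direct algebraic cancellation rather than by any convergence argument.
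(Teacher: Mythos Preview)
Your proof is correct and is exactly the paper's own argument: direct substitution into the quasi-arithmetic formula followed by cancellation of the repeated terms. The additional commentary contrasting this with the Markov-chain argument of Theorem~\ref{th7} is accurate but not part of the paper's (two-line) proof.
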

\begin{proof}$K^{(s_2)}(a,b)=f^{-1}\big(\frac{n\cdot f(a)+n\cdot f(b)}{2n}\big)=f^{-1}\big(\frac{f(a)+f(b)}{2}\big)$.

$K^{(s_3)}(a^{(1)},\dots,a^{(n)})=f^{-1}\big(\frac{2f(a^{(1)})+\dots+2f(a^{(n)})}{2n}\big)=f^{-1}\big(\frac{f(a^{(1)})+\dots+f(a^{(n)})}{n}\big)$.
\end{proof}

We close this section with some counterexamples.

\begin{ex}$K^{(s_1)}\ne K^{(s_2)}$ in general.
\end{ex}
\begin{proof}Let $K(a,b,c,d)=\frac{\sqrt{ab}+\sqrt{ac}+\sqrt{ad}+\sqrt{bc}+\sqrt{bd}+\sqrt{cd}}{6}$. Then $K$ is strictly internal, monotone and continuous.

When calculating $K^{(s_1)}$, we have to solve $\sqrt{ab}+2\sqrt{ax}+2\sqrt{bx}+x=6x$ for $x$. The solution is $x=\Big(\frac{\sqrt{a}+\sqrt{b}+\sqrt{a+b+7\sqrt{ab}}}{5}\Big)^2$.

However for $K^{(s_2)}(a,b)$ we get $\frac{a+b+4\sqrt{ab}}{6}$.
\end{proof}

\begin{ex}Let $K(a,b,c,d)=\sqrt{ab+ac+bd+cd \over{4}}$. Then $K$ is strictly internal, monotone and continuous. An easy calculation shows that

(1) $K^{(s_1)}(a,b)=K^{(s_2)}(a,b)=\frac{a+b}{2}$.

(2) When we use the general shrinking method $K^{(T_{4,2})}$ for e.g. $a=1,b=3$ then we get $b_4<2=\frac{a+b}{2}$ hence the limit will be less than $\frac{a+b}{2}$ because $(b_n)$ is decreasing.
\end{ex}

\begin{ex}Let $K(a,b,c)=\sqrt{ab+ac+bc \over{3}}$. Then $K$ is strictly internal, monotone and continuous. An easy calculation shows that

(1) $K^{(s_1)}(a,b)=\frac{a+b+\sqrt{(a+b)^2+12ab}}{6}$.

(2) When we use the general shrinking method $K^{(T_{3,2})}$ it gives a different result, for e.g. $a=0.1,b=2$ then we get $b_3<0.781$ hence the limit will be less than that because $(b_n)$ is decreasing. And for the same values the method in (1) gives approx. $0.784$.
\end{ex}

\begin{ex}Let $L(a,b,c)=\frac{\min\{a,b,c\}+\max\{a,b,c\}}{2}$. Then 

(a) there does not exist a 2-variable strictly internal, monotone, continuous mean $K$ such that $L=K^{(T_{2,3})}$ 

(b) $(L^{(T_{3,2})})^{(T_{2,3})}\ne L$. 
\end{ex}
\begin{proof}First note that $L$ is symmetric, strictly internal, monotone and continuous hence our method is applicable. 

(a) Suppose there is such $K$. By \ref{pmpkt} (2b) we have $K^{(T_{2,3})}(a,a,b)<K^{(T_{2,3})}(a,b,b)\ (a<b)$ which obviously does not hold for $L$. 

(b) Obviously $L^{(T_{3,2})}(a,c)=\frac{a+c}{2}$ because all sequences are equal to that value. But this is the 2-variable arithmetic mean and then $(L^{(T_{3,2})})^{(T_{2,3})}\ne L$ since $(L^{(T_{3,2})})^{(T_{2,3})}$ is the 3-variable arithmetic mean. 
\end{proof}

%---------------------------------------------------------------------------------------------------------------------------------------------------------On compounding---------------
\section{On compounding}
We can simply generalize our extension method from $n$-variable to $m$-variable by interchanging $K$ to $m$ pieces of $n$-variable means.

\begin{thm}Let $n<m$ and $K_1,\dots, K_m$ $n$-variable means be given such that $K_1\leq\dots\leq K_m$.  Let $a^{(1)}\leq\dots\leq a^{(m)}$ and $T$ be an admissible system for $(n,m)$. Let us define $m$ sequences in the following way.

Let $a^{(1)}_0=a^{(1)},\dots,a^{(m)}_0=a^{(m)}$ and set $a^{(i)}_{k+1}=K_i(a^{(j_{i,1})}_k,\dots,a^{(j_{i,n})}_k)\ (1\leq i\leq m,k\in\mathbb{N})$ where $t_i\in T,\ t_i=(j_{i,1},\dots,j_{i,n})\in I^n_m$.

Then all sequences converge to the same limit that is between $a^{(1)}$ and $a^{(m)}$. If we consider it as a mean of $a^{(1)},\dots,a^{(m)}$ then this mean is strictly internal, monotone and continuous.
\end{thm}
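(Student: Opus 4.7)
The plan is to adapt the proofs of Theorem \ref{t4} and Theorem \ref{tktc} to the setting where each sequence is generated by its own mean $K_i$ instead of a single common $K$. The hypothesis $K_1\leq\dots\leq K_m$ is exactly what replaces the role of ``same mean everywhere'' and, together with property (1) of $T$, it lets us preserve the ordering of the sequences across one iteration. Once this ordering is in hand, the remaining steps carry over almost literally, with $K$ replaced by $K_i$ or $K_j$ at the appropriate spot.

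The only genuinely new step is to show by induction on $k$ that $a^{(1)}_k\leq\dots\leq a^{(m)}_k$. Given the inequality at step $k$, for each $i$ one would chain
\begin{align*}
a^{(i)}_{k+1} &= K_i\bigl(a^{(j_{i,1})}_k,\dots,a^{(j_{i,n})}_k\bigr) \\
&\leq K_i\bigl(a^{(j_{i+1,1})}_k,\dots,a^{(j_{i+1,n})}_k\bigr) \\
&\leq K_{i+1}\bigl(a^{(j_{i+1,1})}_k,\dots,a^{(j_{i+1,n})}_k\bigr) = a^{(i+1)}_{k+1},
\end{align*}
where the first inequality uses property (1), i.e.\ $t_i\leq t_{i+1}$, together with monotonicity of $K_i$ and the inductive hypothesis, and the second uses the assumption $K_i\leq K_{i+1}$. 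Property (3) then gives $1\in t_1$ and $m\in t_m$, so that $(a^{(1)}_k)$ is increasing and $(a^{(m)}_k)$ is decreasing inside $[a^{(1)},a^{(m)}]$; both therefore converge.

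That all sequences converge to a common limit would then be obtained by following the proof of Theorem \ref{t4} essentially verbatim, with the single notational change $K\mapsto K_j$: suppose $a^{(1)}_k\to c$, let $i$ be the least index whose sequence fails to converge to $c$, pick $j<i$ with $i\in t_j$ via property (4), extract a subsequence $(k_q)$ along which every $a^{(p)}_{k_q}$ with $p\in t_j$ converges to some $w_p\geq c$ (with $w_i>c$), and use strict internality plus continuity of $K_j$ at $(w_{u_1},\dots,w_{u_n})$ to contradict $a^{(j)}_{k_q+1}\to c$. The argument uses only that the mean applied at the particular index $j$ is strictly internal and continuous, both of which hold for $K_j$.

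Strict internality, monotonicity, and continuity of the resulting $m$-variable mean are then obtained exactly as in Theorem \ref{tktc}. The $l_k$-reduction argument for strict internality, where $l_k$ is the largest index with $a^{(l_k)}_k=a^{(1)}$, uses only property (3) and strict internality of the single mean $K_{l_k}$ at that step; monotonicity descends through the induction on $k$ from monotonicity of each $K_i$; and continuity follows from the evident analogue of Proposition \ref{r2b} for this mixed iteration, proved by induction on $k$ using continuity of each individual $K_i$. I expect the order-preservation step displayed above to be the only real obstacle: without the chain $K_1\leq\dots\leq K_m$ the jump from index $i$ to $i+1$ would not be controlled, and the inductive structure of both predecessor theorems would collapse.
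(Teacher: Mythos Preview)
Your proposal is correct and follows the same route as the paper, which simply says to replace $K$ by $K_j$ in the proof of Theorem \ref{t4} and by $K_1,\dots,K_m$ in the proof of Theorem \ref{tktc} (with the obvious analogue of Proposition \ref{r2b}). In fact you are more explicit than the paper: you isolate the order-preservation step $a^{(1)}_k\leq\dots\leq a^{(m)}_k$ and show precisely how the hypothesis $K_1\leq\dots\leq K_m$ combines with property (1) and monotonicity to carry it through, a point the paper leaves implicit.
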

\begin{proof}For convergence replace $K$ by $K_j$ in the proof of \ref{t4}.

For showing the second part, copy the proof of \ref{tktc} substituting $K$ by $K_1,\dots, K_m$ and remark that \ref{r2b} remains valid as well.
\end{proof}

If $n=m$ and $\forall i\ j_{i,h}=h$ then clearly it is a generalization of compounding of two means.

%---------------------------------------------------------------------------------------------------------------------------------------------------------Symmetrization---------------
\section{Symmetrization}
Using similar technique we can symmetrize a non-symmetric 2-variable mean. Let $\circ$ be a non-symmetric, strictly internal, monotone, continuous mean.
Let $a<b \in\mathbb{R}$ be given. Let us define two sequences:

$a_0=a,b_0=b$.

$a_{n+1}=\min{\{a_n\circ b_n,b_n\circ a_n\}},b_{n+1}=\max{\{a_n\circ b_n,b_n\circ a_n\}}$.

Obviously $a\leq a_n$ is increasing while $b_n\leq b$ is decreasing, therefore both converges. By continuity they must converge to the same limit point. Let us denote it by $K^{(sym)}(a,b)$.

\begin{prp}$K^{(sym)}$ is symmetric.
\end{prp}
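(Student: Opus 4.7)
The plan is to exploit the fact that the recursion defining $(a_n)$ and $(b_n)$ is, after the initial step, insensitive to which input was labelled $a$ and which was labelled $b$. First, I would observe that although the definition in the paper is phrased under the assumption $a<b$, the recursion itself makes sense for any starting pair $(x,y)$: the operations $\min$ and $\max$ applied to $\{x\circ y,\,y\circ x\}$ automatically produce a correctly ordered pair at every step $n\geq 1$, so convergence to a common limit is unaffected. This lets me interpret $K^{(sym)}(b,a)$ as the limit of the run started with $a_0=b,\ b_0=a$.

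The central observation is then that the update $(a_n,b_n)\mapsto(a_{n+1},b_{n+1})$ depends only on the unordered pair $\{a_n,b_n\}$, because both $\min\{a_n\circ b_n,\,b_n\circ a_n\}$ and $\max\{a_n\circ b_n,\,b_n\circ a_n\}$ are visibly symmetric functions of the two arguments $a_n,b_n$. Consequently, running the algorithm with $(b,a)$ in place of $(a,b)$ produces, at step $1$, exactly the same ordered pair $(a_1,b_1)$ as the original run. An immediate induction on $n$ shows that the two runs agree from step $1$ onward, and therefore share the same common limit. This delivers $K^{(sym)}(a,b)=K^{(sym)}(b,a)$.

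The main obstacle here is essentially nothing: symmetry is baked into the use of $\min$ and $\max$ at each recursive step. The only genuinely substantive point is the interpretational one, namely that $K^{(sym)}(b,a)$ must be read as the result of running the same defining recursion without the ordering convention; once this is granted, the proof reduces to the symmetry of $\min$ and $\max$ in two arguments. No additional properties of $\circ$ (such as strict internality or continuity) are needed for the symmetry conclusion itself, although they are of course what guarantees that both runs converge in the first place.
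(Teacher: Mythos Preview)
Your proposal is correct and follows essentially the same approach as the paper: the paper's proof is the single sentence ``The associated sequences for $(a,b)$ and $(b,a)$ are the same,'' which is precisely your observation that the update depends only on the unordered pair $\{a_n,b_n\}$. Your treatment is more careful in handling the interpretational point about the $a<b$ convention, but the mathematical content is identical.
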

\begin{proof} The associated sequences for $(a,b)$ and $(b,a)$ are the same.
\end{proof}

%---------------------------------------------------------------------------------------------------------------------------------------------------------thebibliography-----------------

{\footnotesize

\smallskip
\noindent
%Attila Losonczi,\\ Hungary 2120 Dunakeszi Bojtorj\'an u. 3/1\\
Dennis G\'abor College, Hungary 1119 Budapest Fej\'er Lip\'ot u. 70.

\noindent 
E-mail: losonczi@gdf.hu, alosonczi1@gmail.com\\
}

\end{document}